\newtheorem{theorem}{Theorem}[section]
\newtheorem{lemma}[theorem]{Lemma}
\numberwithin{equation}{section}
\renewcommand{\div}{\mathop{\rm div}\nolimits}
\newcommand{\xoe}{{x\over\epsilon}}
\newcommand{\ep}{\epsilon}
\newcommand{\jrp}[1]{{\color{brown}{#1}}}
\newcommand{\beq}{\begin{equation}}
\newcommand{\eeq}{\end{equation}}
\newcommand{\beqas}{\begin{eqnarray*}}
\newcommand{\eeqas}{\end{eqnarray*}}
\newcommand{\beqq}{\begin{equation*}}
\newcommand{\eeqq}{\end{equation*}}
\newcommand{\bsp}{\begin{split}}
\newcommand{\esp}{\end{split}}
\newcommand{\eps}{\epsilon}
\newcommand{\norm}[1]{\left\lVert#1\right\rVert}
\newcommand{\IR}{\mathbb{R}}
\title{\bf Homogenization of a multiscale multi-continuum system}
\author{ 
Jun Sur Richard Park,\\
Department of Mathematics,\\
Texas A\&M University, College Station, TX 77843 \\[10pt]
Viet Ha Hoang,\\
Division of Mathematical Sciences,\\
School of Physical and Mathematical Sciences,\\
 Nanyang Technological University, Singapore 637371 
}
\begin{document}
\maketitle
\begin{abstract}
We study homogenization of a locally periodic two-scale dual-continuum system where each continuum interacts with the other. 
Equations for each continuum are written separately with interaction terms (exchange terms) added. The homogenization limit depends strongly on the scale of this continuum interaction term with respect to the microscopic scale. In J. S. R. Park and V. H. Hoang, {\it Hierarchical multiscale finite element method for multicontinuum media}, arXiv:1906.04635, we study in details the case where the interaction terms are scaled as $O(1/\ep^2)$ where $\ep$ is the microscale of the problem. We establish rigorously homogenization limit for this case where we show that in the homogenization limit, the dual-continuum structure disappears. In this paper, we consider the case where this term is scaled as $O(1/\ep)$. This case is far more interesting and difficult as the homogenized problem is a dual-continuum system which contains features that are not in the original two scale problem. 
In particular, the homogenized dual-continuum system contains extra convection terms and negative interaction  coefficients while the interaction coefficient between the continua in the original two scale system obtains both positive and negative values. We prove rigorously the homogenization convergence. We also derive rigorously a homogenization convergence rate. Homogenization of dual-continuum system of this type has not been considered before.
\end{abstract}
{\bf Key words.} multiscale, homogenization, upscaling, multi-continuum.
\section{Introduction}

In real life applications, media with multiple continua often involve multiple scales due to heterogeneous media property and complicated configuration of the continua. Simulations in those media are often very expensive and require some type of model reduction. One of the model reduction methods is multi-continuum approach \cite{barenblatt1960basic,warren1963behavior,kazemi1976numerical,wu1988multiple,pruess1982fluid} where equations for each continuum are written separately with some interaction terms (exchange terms) that represent interrelations between the continua. Those interaction terms are coupled, hence, one has to deal with a system with several coupled equations. In this paper, we study 
homogenization of a dual-continuum system with two-scale coefficients that are periodic with respect to micro-scale variable. 

There has been much effort to develop numerical methods for solving {multiscale system with reduced complexity. We mention exemplarily the multiscale finite element methods (MsFEM) \cite{eh09,ehg04}, the generalized multiscale finite element method (GMsFEM)\cite{chung2016adaptive},  the heterogeneous multiscale methods (HMM) \cite{ee03},  and the local orthogonal decomposition (LOD) \cite{maalqvist2014localization}. 
For multi-continuum systems, numerical methods such as the GMsFEM (\cite{chung2017coupling}), constraint energy minimizing (CEM) (\cite{cheung2018constraint}) and non-local multi-continuum method (NLMC)(\cite{vasilyeva2019nonlocal,chung2018non,vasilyeva2018three}) have been developed and employed.}
These methods are known to have several advantages and good convergence results. However, they do not use the periodicity or local periodicity of the coefficients and sometimes become expensive when the fine mesh size is necessarily
much smaller than the coarse mesh size. 
When the coefficients of the multiscale problem are periodic or locally periodic, the multiscale equations can be approximated by the equivalent homogenized equations whose coefficients do not vary rapidly. The theory of homogenization has a long and {successful} history. We mention only those now classical references Bensoussan et al. \cite{papanicolau1978asymptotic}, Bakhvalov and Panasenko \cite{bakhvalov1989homogenisation} and Jikov et al. \cite{jikov2012homogenization}. However, for multiscale multi continuum systems where the multiple continua interact with each other, there has been very little literature. As we show in this paper, homogenization of these systems can result in very interesting effective phenomena that are not often seen in homogenization literature.
In \cite{park2019hierarchical}, we study homogenization of the two-scale dual-continuum system \eqref{eq:main1} with the interaction terms (exchange terms) being scaled as ${\mathcal O}(\frac{1}{\ep^2})$ instead, where $\ep$ represents the microscopic scale. We prove that in the homogenization limit, the two-continuum feature disappears, i.e.  $u_1^\ep$ and $u_2^\ep$  converge to the same limit.
In this paper, we consider the case
where the interaction terms are scaled as ${\mathcal O}(\frac{1}{\ep})$. We prove that the homogenization system for this case is far more interesting and complicated. The homogenized two-continuum system consists of convection continuum interacting terms which do not appear in the original two-scale system \eqref{eq:main1}. 
Furthermore, the homogenized two-continuum system has negative interaction coefficients while the interaction coefficients of the original two scale system obtain both positive and negative values.  

We provide a rigorous proof of homogenization convergence. The proof is significantly more difficult than that for the system in \cite{park2019hierarchical} because of the $\frac{1}{\ep}$-scale of the interaction terms in our system and the complicated homogenization limit.
We also derive a homogenization error under regularity conditions for the solutions to the cell problems and the homogenized equation.  Such a homogenization error has never been derived for multiscale multi-continuum systems before.
The main results are Theorems \ref{weakconv} and \ref{hom_error}.

The paper is organized as follows. In Section 2, we set up the two scale multi-continuum systems. We perform the two scale asymptotic expansion to derive the homogenized multicontinuum systems. It is clear from the two scale asymptotic expansion that the homogenized system contains extra convection terms that do not appear in the original two scale system. We then state the main results of the paper on the convergence of the solution of the multiscale multi-continuum system to the solution of the homogenized multi-continuum system.
%
In Section 3, we present the proof of the convergence of the solutions of the multiscale system to the solution of the homogenized  system. 
In Section 4, we derive a corrector and prove a homogenization error estimate where the solutions to the cell problems and the solutions to the homogenized multi-continuum system are sufficiently regular. Finally, the appendix in the end of the paper contains the proofs of the existence and uniqueness of solutions to both the original two scale system and the homogenized equations. 

In this paper, we denote the gradient with respect to $x$ 
of a function that only depends on the variable $x$, or the variables $x$ and $t$ by $\nabla$.
By $\nabla_x$, we denote the partial gradient with respect to $x$ of a function that depends on $x$, $y$ and $t$. Repeated indices indicate summation. The notation $\#$ denotes spaces of periodic functions.

\section{Problem formulation}
 \subsection{Two scale multicontinuum problem}
Let $\Omega$ be a bounded domain in $\mathbb{R}^d$. Let $Y$ be a unit cube in $\mathbb{R}^d$.
Let $Q(x,y)$, ${\mathcal C}_{ii}(x,y)$ and $\kappa_i(x,y)$ ($i=1,2$) be continuous functions on $\Omega\times Y$ which are $Y$-periodic with respect to $y$. We assume that 
\beq
\int_YQ(x,y)dy=0.
\label{eq:Qaverage}
\eeq
 Let $T>0$. Let $q$ be a function in $L^2((0,T)\times\Omega)$. 
Let $\epsilon > 0$ be a small quantity that represents the microscopic scale the coefficients depend on. 
We define the two scale coefficients as
 \begin{equation}
\label{eq:main2}
 \begin{split}
{\mathcal C}_{ii}^\epsilon(x) = {\mathcal C}_{ii}(x,\frac{x}{\epsilon}),\ \kappa_i^\epsilon(x) = \kappa_i(x,\frac{x}{\epsilon}), \ \ i=1,2,\ \ \mbox{and}\ 
\ Q^\epsilon(x) = Q(x,\frac{x}{\epsilon}).
 \end{split}
 \end{equation}
Let $H$ denote the space $L^2(\Omega)$ and $V$ denote the space $H^1_0(\Omega)$.  
We consider the following dual-continuum system.
\begin{equation}
\label{eq:main1}
 \begin{split}
{\mathcal C}_{11}^\epsilon(x){\partial u_1^\epsilon(t,x)  \over \partial t}=\text{div}(\kappa_1^\epsilon(x)\nabla u_1^\epsilon(t,x)) + {1 \over \epsilon}Q^\epsilon(x)(u_2^\epsilon(t,x)-u_1^\epsilon(t,x)) + q, \ \ x \in \Omega,\\
{\mathcal C}_{22}^\epsilon(x){\partial u_2^\epsilon(t,x) \over \partial t}=\text{div}(\kappa_2^\epsilon(x) \nabla u_2^\epsilon(t,x)) + {1 \over \epsilon}Q^\epsilon(x)(u_1^\epsilon(t,x)-u_2^\epsilon(t,x)) + q, \ \ x \in \Omega,
 \end{split}
 \end{equation} 
 with the Dirichlet boundary condition $u_1^\epsilon(t,x)=u_2^\epsilon(t,x)=0$ for $x\in\partial \Omega$, and with the initial condition $u_1^\epsilon(0,x)=g_1(x)$, $u_2^\epsilon(0,x)=g_2(x)$ where $g_1$ and $g_2$ are in $H$.
We assume there exist positive constants $\underline{C},\ \underline{\kappa}$ such that
\beq
{\mathcal C}_{ii}(x,y)\ge \underline{C},\ \kappa_i(x,y)\ge  \underline{\kappa}.
\label{eq:coercivity}
\eeq
In the weak form, equations \eqref{eq:main1} are of the form
  \begin{equation}
\label{eq:main65}
 \begin{split}
\int_{\Omega}{\mathcal C}_{11}^\epsilon{\partial u_1^\epsilon  \over \partial t}\phi_1  dx+ \int_{\Omega} \kappa_1^\epsilon\nabla u_1^\epsilon \cdot \nabla \phi_1  dx
-{1 \over \epsilon} \int_{\Omega} Q^\epsilon(u_2^\epsilon-u_1^\epsilon)\phi_1 dx = \int_{\Omega}q \phi_1  dx,\\
\int_{\Omega}{\mathcal C}_{22}^\epsilon{\partial u_2^\epsilon  \over \partial t} \phi_2  dx+ \int_{\Omega} \kappa_2^\epsilon\nabla u_2^\epsilon \cdot \nabla \phi_2  dx
-{1 \over \epsilon} \int_{\Omega} Q^\epsilon(u_1^\epsilon-u_2^\epsilon)\phi_2  dx = \int_{\Omega}q \phi_2  dx.
 \end{split}
 \end{equation}
 for all $\phi_1$ and $\phi_2$ in $C^\infty_0(\Omega)$.
We will prove in the appendix that system \eqref{eq:main1} has a unique solution $(u_1^\ep, u_2^\ep) \in L^2(0,T;V) \cap 
H^1(0,T; V')\times L^2(0,T;V) \cap H^1(0,T; V')$ which satisfies 
\beq
\label{eq:unibound}
||u_1^\ep||_{L^2(0,T;V)\cap H^1(0,T; V')}+ ||u_2^\ep||_{ L^2(0,T; V)\cap H^1(0,T; V')}
\leq C
\eeq
for a constant $C>0$ independent of $\ep$.
\subsection{Homogenization of multi-continuum system}
We study homogenization of this multi-continuum system by using the standard two scale asymptotic expansion.
We consider the two scale asymptotic expansion of $u_1^\epsilon$ and $u_2^\epsilon$
\begin{equation}
\label{eq:main3}
 \begin{split}
&u^{\epsilon}_1(t,x) = u_{10} (t,x,{x\over\epsilon})+ \epsilon u_{11}(t,x,{x\over\epsilon}) + \cdots\\
&u^{\epsilon}_2(t,x) = u_{20} (t,x,\xoe)+ \epsilon u_{21}(t,x,\xoe)+ \cdots,
 \end{split}
 \end{equation}
 where the functions $u_{1j}(t,x,y)$ and $u_{2j}(t,x,y)$ are $Y$-periodic with respect to $y $. 
 From \eqref{eq:main1}, we have
 \begin{equation}
  \label{eq:main4}
 \begin{split}
{\mathcal C}_{11}&{\partial (u_{10} + \epsilon u_{11} + \cdots) \over \partial t}\\ &= (\div_x + {1 \over \epsilon} \div_y)(\kappa_1 (\nabla_x + {1 \over \epsilon} \nabla_y)(u_{10} + \epsilon u_{11}+ \cdots))
+ {1 \over \epsilon} Q(u_{20}+ \epsilon u_{21} - u_{10}- \epsilon u_{11}+ \cdots) + q, \\
{\mathcal C}_{22}&{\partial (u_{20} + \epsilon u_{21} + \cdots) \over \partial t}\\ &= (\div_x + {1 \over \epsilon} \div_y)(\kappa_2 (\nabla_x + {1 \over \epsilon} \nabla_y)(u_{20} + \epsilon u_{21}+ \cdots))
+ {1 \over \epsilon} Q(u_{10}+ \epsilon u_{11} - u_{20}- \epsilon u_{21}+ \cdots) + q.
 \end{split}
 \end{equation}
 Collecting the $\epsilon^{-2}$ terms, we obtain 
 \begin{equation}
  \label{eq:main5}
 \begin{split}
&\div_y (\kappa_1(x,y) \nabla_y u_{10}(t,x,y)) = 0\\
&\div_y (\kappa_2(x,y) \nabla_y u_{20}(t,x,y)) = 0.
 \end{split}
 \end{equation}
  From this, we deduce $u_{10}$ and $u_{20}$ are independent of $y$. Collecting the $\epsilon^{-1}$ terms we obtain
\begin{equation}
  \label{eq:main6}
 \begin{split}
&\div_y (\kappa_1 \nabla u_{10}) + \div_y (\kappa_1\nabla_y u_{11}) + Q(u_{20} - u_{10}) = 0\\
&\div_y (\kappa_2 \nabla u_{20}) + \div_y (\kappa_2\nabla_y u_{21}) + Q(u_{10} - u_{20}) = 0.
 \end{split}
 \end{equation} 
Therefore,
\begin{equation}
\label{eq:main7}
\begin{split}
u_{11}(t,x,y) = \sum_{i=1}^d N^i_1(x,y) \frac{\partial u_{10}(t,x)}{\partial x_i} + M_1(x,y) (u_{20}(t,x) - u_{10}(t,x))\\
u_{21}(t,x,y) = \sum_{i=1}^d N^i_2(x,y) \frac{\partial u_{20}(t,x)}{\partial x_i} + M_2(x,y) (u_{10} (t,x)- u_{20}(t,x)),
\end{split}
\end{equation} 
where $N^i_1(x,y)$, $N^i_2(x,y)$ ($i=1,\ldots,d$), $M_1(x,y)$ and $M_2(x,y)$, as functions of $y$ are the solutions 
of the following cell problems respectively.
\begin{equation}
\label{eq:cell}
\begin{split}
&\div_y(\kappa_1(x,y)(e^i + \nabla_y N^i_1(x,y))) = 0\\
&\div_y(\kappa_1(x,y)\nabla_y M_1(x,y)) + Q(x,y) = 0\\
&\div_y(\kappa_2(x,y)(e^i + \nabla_y N^i_2(x,y))) = 0\\
&\div_y(\kappa_2(x,y)\nabla_y M_2(x,y)) + Q(x,y) = 0
\end{split}
\end{equation}
with the periodic boundary condition,
where $e^i$ is the $i$th standard basis vector of $\mathbb{R}^d$. Problems (\ref{eq:cell} (a),(c)) have a unique solution in $H^1_\#(Y)/\IR$; problems (\ref{eq:cell} (b),(d)) have a unique  solution since $\int_Y Q(x,y)  dy = 0$.
Collecting the $\epsilon^{0}$ terms, we have,
\begin{equation}
\label{eq:main8}
\begin{split}
{\mathcal C}_{11}{\partial u_{10} \over \partial t} = &\div_x(\kappa_1 \nabla u_{10}) + \div_y(\kappa_1 \nabla_x u_{11}) + \div_x(\kappa_1 \nabla_y u_{11}) + \div_y(\kappa_1\nabla_y u_{12}) + Q(u_{21} - u_{11}) + q \\
{\mathcal C}_{22}{\partial u_{20} \over \partial t} = &\div_x(\kappa_2 \nabla u_{20}) + \div_y(\kappa_2 \nabla_x u_{21}) + \div_x(\kappa_2 \nabla_y u_{21}) + \div_y(\kappa_2\nabla_y u_{22}) + Q(u_{11} - u_{21}) + q .
\end{split}
\end{equation} 
Integrating with respect to $y$ over $Y$ and using (\ref{eq:main7}), we have 
\begin{equation}
\label{eq:main9}
\begin{split}
& \left(\int_Y {\mathcal C}_{11} dy\right){\partial u_{10} \over \partial t}
= \div (\kappa_1^*\nabla u_{10}) 
+\div \bigg(\big(\int_Y \kappa_1 \nabla_y M_1  dy\big)(u_{20}-u_{10})\bigg)
\\ &+ \bigg(\big(\int_Y QN^i_2  dy\big)\frac{\partial u_{20}}{\partial x_i} - \big(\int_Y QN^i_1  dy\big)\frac{\partial u_{10}}{\partial x_i}\bigg) - \left(\int_YQ(M_1+M_2)  dy\right)(u_{20}-u_{10}) + q\\
&\left(\int_Y{\mathcal C}_{22}  dy\right) {\partial u_{20} \over \partial t} 
= \div (\kappa_2^*\nabla u_{20}) 
+\div \bigg(\big(\int_Y \kappa_2 \nabla_y M_2  dy\big)(u_{10}-u_{20})\bigg)
\\& + \bigg(\big(\int_Y QN^i_1  dy\big)\frac{\partial u_{10}}{\partial x_i} - \big(\int_Y QN^i_2  dy\big)\frac{\partial u_{20}}{\partial x_i}\bigg) - \left(\int_YQ(M_1+M_2) dy\right)(u_{10}-u_{20})+ q ,
\end{split}
\end{equation} 
where
\begin{equation}
 \label{eq:main10}
 \begin{split}
\kappa^*_{1ij}(x) =  \int_Y  \kappa_1 (x,y) (\delta_{ij} + {\partial  N^j_1(x,y)\over \partial y_i})  dy\\
\kappa^*_{2ij}(x) =  \int_Y  \kappa_2 (x,y) (\delta_{ij} + {\partial  N^j_2(x,y)\over \partial y_i})  dy.
\end{split}
\end{equation}
We note that $\kappa^*_{1ij}(x)$ and $\kappa^*_{2ij}(x)$ are standard homogenized coefficients for elliptic problems \cite{papanicolau1978asymptotic}. They are symmetric and positive definite (\cite{papanicolau1978asymptotic}).
We will show  in Section \ref{sec:3} that the initial conditions for $u_{10}$, $u_{20}$ are
\beq
u_{10}(0,x)=g_1(x),\ u_{20}(0,x)=g_2(x).
\label{eq:initcond}
\eeq
In the appendix, we show that the homogenized problem (\ref{eq:main9}) with these initial conditions has a unique solution. 

\textit{Remark.} The case where the continuum interacting term is scaled as $1/\ep$ considered in this paper has the most interesting homogenization limit, in comparison to other scalings, e.g. the $1/\ep^2$ scale case considered in \cite{park2019hierarchical}. It can be shown that the continuum interacting coefficient $- \int_YQ(M_1+M_2) dy$ in (\ref{eq:main9}) is always negative while the interaction coefficient $\frac{1}{\ep}Q$ in the two-scale problem can be both positive and negative due to Assumption \eqref{eq:Qaverage}.  The homogenized equation (\ref{eq:main9}) has convection terms, which is different from the original equation (\ref{eq:main1}). 

We have the following homogenization results.
\begin{theorem}
\label{weakconv}
Assume that the solution $N_1^i$ and $N_2^i$ ($i=1,\ldots,d$) of cell problem (\ref{eq:cell} (a),(c)) belong to $C^2(\bar\Omega,C^2(\bar Y))$ and the coefficients $\kappa_1$ and $\kappa_2$ belong to $C^1(\bar\Omega,C^1(\bar Y))$.
The sequence $(u_1^\ep,u_2^\ep)$ of the solutions to (\ref{eq:main1}) converges weakly to $(u_{10}, u_{20})$ in $L^2(0,T;V)\times L^2(0,T;V)$, where $(u_{10},u_{20})$ is the solution of the homogenized equations \eqref{eq:main9} with initial conditions \eqref{eq:initcond}.
\end{theorem}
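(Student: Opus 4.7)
My plan is to pass to the limit in the weak formulation \eqref{eq:main65} by Nguetseng two-scale convergence, with the crux being the treatment of the singular exchange term of order $1/\epsilon$. The uniform bound \eqref{eq:unibound} yields a subsequence along which $u_\ell^\epsilon \rightharpoonup u_{\ell0}$ weakly in $L^2(0,T;V)$ and, by Nguetseng compactness, $\nabla u_\ell^\epsilon$ two-scale converges to $\nabla u_{\ell0}(t,x)+\nabla_y u_{\ell1}(t,x,y)$ for some $u_{\ell1}\in L^2((0,T)\times\Omega;\,H^1_\#(Y)/\IR)$ ($\ell=1,2$). Aubin--Lions provides strong convergence in $L^2(0,T;H)$ and transfers the initial data \eqref{eq:initcond}. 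To identify $u_{\ell1}$, I test the $\ell$-th equation in \eqref{eq:main65} with $\phi_\ell^\epsilon(t,x)=\epsilon\varphi(t,x)\psi(x/\epsilon)$, $\varphi\in C^\infty_c((0,T)\times\Omega)$, $\psi\in C^\infty_\#(Y)$: the factor $\epsilon$ kills the time-derivative and source contributions and neutralizes the $1/\epsilon$ singularity, and two-scale passage gives
\[
\int_Y\kappa_\ell(x,y)\bigl(\nabla u_{\ell0}+\nabla_y u_{\ell1}\bigr)\cdot\nabla_y\psi\,dy=\Bigl(\int_Y Q\psi\,dy\Bigr)(u_{m0}-u_{\ell0})
\]
for a.e.\ $(t,x)$ and $m\neq\ell$; linearity and uniqueness for the cell problems \eqref{eq:cell} then produce the ansatz \eqref{eq:main7}.

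\textbf{Step 2: the singular exchange term (main obstacle).} For a smooth test $\phi(t,x)$ independent of $y$, set $v^\epsilon:=u_2^\epsilon-u_1^\epsilon$. Since $\int_Y Q\,dy=0$, I solve $-\Delta_y\xi(x,y)=Q(x,y)$ in $H^1_\#(Y)/\IR$ and set $\Theta:=\nabla_y\xi$, so $\div_y\Theta=-Q$. The chain rule supplies the key identity
\[
\tfrac{1}{\epsilon}Q^\epsilon(x)=(\div_x\Theta)(x,x/\epsilon)-\div\bigl(\Theta(x,x/\epsilon)\bigr),
\]
and integration by parts (using that $\phi$ has compact support in $\Omega$) converts the singular integral into
\[
\tfrac{1}{\epsilon}\int_\Omega Q^\epsilon v^\epsilon\phi\,dx=\int_\Omega(\div_x\Theta)^\epsilon v^\epsilon\phi\,dx+\int_\Omega\Theta^\epsilon\!\cdot\!\nabla v^\epsilon\,\phi\,dx+\int_\Omega\Theta^\epsilon\!\cdot\!\nabla\phi\,v^\epsilon\,dx.
\]
Periodicity of $\xi$ gives $\int_Y\Theta\,dy=0$ and $\int_Y\div_x\Theta\,dy=0$, so combining the resulting weak $L^2$ convergence of the coefficients with the strong $L^2$ convergence of $v^\epsilon$ kills the first and third integrals. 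The middle one, via two-scale convergence of $\nabla v^\epsilon$ and the vanishing $y$-average of $\Theta$, converges to $\int_\Omega\!\int_Y\Theta\cdot\nabla_y(u_{21}-u_{11})\,dy\,\phi\,dx$, which equals $\int_\Omega\!\int_Y Q(u_{21}-u_{11})\,dy\,\phi\,dx$ via the identity $\int_Y\nabla_y\xi\cdot\nabla_y w\,dy=\int_Y Qw\,dy$ for periodic $w$. Substituting the ansatz \eqref{eq:main7} produces exactly the convection coefficients $\int_Y QN_1^k\,dy$, $\int_Y QN_2^k\,dy$ and the (always negative) exchange coefficient $-\int_Y Q(M_1+M_2)\,dy$ of \eqref{eq:main9}.

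\textbf{Step 3: assembly and conclusion.} The time-derivative term passes to the limit by duality in $V'$; the diffusion piece $\int\kappa_\ell^\epsilon\nabla u_\ell^\epsilon\cdot\nabla\phi\,dx$ produces $\kappa_\ell^*\nabla u_{\ell0}$ together with the extra divergence $\div\bigl((\int_Y\kappa_\ell\nabla_y M_\ell\,dy)(u_{m0}-u_{\ell0})\bigr)$ arising from the $\nabla_y u_{\ell1}$ contribution; the source converges trivially. Combined with Step 2, this recovers the weak form of \eqref{eq:main9} satisfied by $(u_{10},u_{20})$, and uniqueness of the homogenized solution (proven in the appendix) extends convergence from subsequence to the full sequence. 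The decisive difficulty is Step 2: both the new convection and the always-negative exchange in \eqref{eq:main9} must emerge simultaneously from a single limit of the $1/\epsilon$ term, and the auxiliary cell potential $\xi$ is what makes this extraction rigorous.
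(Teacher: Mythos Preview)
Your proof is correct and takes a genuinely different route from the paper. The paper argues by the classical oscillating test function (Tartar energy) method: it builds explicit correctors $\omega_\ell^\epsilon(x)=x_i+\epsilon N_\ell^i(x,x/\epsilon)$ and $\gamma_\ell^\epsilon(x)=\epsilon M_\ell(x,x/\epsilon)$, inserts $\phi\omega_\ell^\epsilon$ and $\phi\gamma_\ell^\epsilon$ as test functions in \eqref{eq:main65}, and subtracts the equations that $\omega_\ell^\epsilon,\gamma_\ell^\epsilon$ themselves satisfy; the singular $1/\epsilon$ exchange is neutralized by the cell relation $\div_y(\kappa_\ell\nabla_y M_\ell)=-Q$ built into $\gamma_\ell^\epsilon$, and compactness enters only through Lemma \ref{lemma4} (strong $H$-convergence of time averages). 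You instead run Nguetseng two-scale convergence end to end: the corrector $u_{\ell1}$ is identified by testing with $\epsilon\varphi(t,x)\psi(x/\epsilon)$, and the $1/\epsilon$ term is desingularized via an auxiliary Poisson potential $\xi$ with $-\Delta_y\xi=Q$, $\Theta=\nabla_y\xi$, after which the limit reduces to $\int_Y Q(u_{21}-u_{11})\,dy$ by two-scale convergence of $\nabla v^\epsilon$ and Aubin--Lions strong compactness of $v^\epsilon$. What each approach buys: the paper's argument is self-contained (no two-scale machinery) but genuinely needs the $C^2$ regularity of $N_\ell^i$ and implicitly of $M_\ell$ stated in the hypotheses, since these are evaluated pointwise at $y=x/\epsilon$ and differentiated; your argument is more systematic and in principle works with cell solutions merely in $H^1_\#(Y)$, though it does require enough $x$-regularity of $Q$ (hence of $\Theta$) for $\div_x\Theta$ to be bounded---a mild assumption comparable to what the paper's use of $\nabla_x M_\ell$ already demands. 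Structurally, your potential $\xi$ plays exactly the role of the paper's $\gamma_\ell^\epsilon$, but decoupled from the diffusion coefficient $\kappa_\ell$.
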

Since $\int_Y Q(x,y) dy = 0$, there is a vector function $\mathcal{Q}(x, y)$ which is periodic with respect to $y$ such that $Q(x,y) = \div_y \mathcal{Q}(x,y)$ (see \cite{jikov2012homogenization}).
We have the following result on homogenization convergence rate.
\begin{theorem}
\label{hom_error}
Assume $\kappa_1, \kappa_2 \in C^1(\bar{\Omega};C( \bar{Y}))$, $u_{10}, \ u_{20} \in C([0,T];C^2(\bar\Omega))\cap C^1([0,T];C^1(\bar\Omega)) $, 
$N^i_k, M_k \in C^1(\bar{\Omega}, C^1(\bar{Y}))$, ($i=1,\ldots,d$, $k=1,2$),
${\mathcal Q} \in C^2(\bar{\Omega};C^1(\bar{Y}))^2$. Then we have
\beq
\bsp
||\nabla u_{11}^\ep - \nabla u_{10} - \nabla_y u_{11}(\cdot,\cdot,\frac{\cdot}{\ep}) ||_{L^2(0,T;H)}
+||\nabla u_{21}^\ep - \nabla u_{20} - \nabla_y u_{21}(\cdot,\cdot,\frac{\cdot}{\ep})||_{L^2(0,T;H)}
\leq c \ep^{1\over 2}
\end{split}
\eeq
where the constant $c$ is independent of $\ep$.
\end{theorem}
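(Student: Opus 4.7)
The plan is a corrector-based energy argument combined with a boundary-layer cutoff that accounts for the non-vanishing trace of $u_{k1}$ on $\partial\Omega$.

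\textbf{Step 1 (corrector and cutoff).} Define the first-order approximations
\beqq
w_k^\ep(t,x):= u_{k0}(t,x)+\ep\, u_{k1}(t,x,\tfrac{x}{\ep}),\qquad k=1,2,
\eeqq
and pick a cutoff $\eta^\ep\in C_c^\infty(\Omega)$ with $0\le\eta^\ep\le 1$, $\eta^\ep\equiv1$ outside an $\ep$-neighbourhood of $\partial\Omega$, and $|\nabla\eta^\ep|\le c/\ep$. The modified approximants $\tilde w_k^\ep:=u_{k0}+\ep\,\eta^\ep u_{k1}(\cdot,\cdot,\cdot/\ep)$ lie in $V$ since $u_{k0}\in V$. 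Because the strip $\{\eta^\ep\ne1\}$ has measure $O(\ep)$, a direct calculation gives $\|\nabla(w_k^\ep-\tilde w_k^\ep)\|_{L^2(\Omega)}=O(\ep^{1/2})$, which will be the sole source of the $\ep^{1/2}$ rate.

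\textbf{Step 2 (residual estimate).} Compute the residual $r_k^\ep:=\mathcal{C}_{kk}^\ep\partial_t\tilde w_k^\ep-\text{div}(\kappa_k^\ep\nabla\tilde w_k^\ep)-\tfrac1\ep Q^\ep(\tilde w_{3-k}^\ep-\tilde w_k^\ep)-q$. On $\{\eta^\ep=1\}$, using the chain rule $\nabla[u_{k1}(t,x,x/\ep)]=(\nabla_x u_{k1})+\tfrac1\ep(\nabla_y u_{k1})$ and the expression (\ref{eq:main7}) for $u_{k1}$, the $\ep^{-1}$ contributions cancel: the $N_k^i$ part vanishes by the cell problems (\ref{eq:cell}(a),(c)); the $M_k$ part combines with the interaction $\tfrac1\ep Q(u_{(3-k)0}-u_{k0})$ and cancels by (\ref{eq:cell}(b),(d)). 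The $Y$-averages of the $\ep^0$ contributions reproduce the homogenized system (\ref{eq:main9}) and cancel against it. What remains is a finite sum of expressions of the form $F(x,x/\ep)h(t,x)$ with $\int_Y F(x,y)\,dy=0$, plus explicit $O(\ep)$ terms. Under the stated regularity, each such $F$ admits a representation $F=\text{div}_y G$ with $G$ periodic and $C^1$ in both variables (for the coefficient proportional to $Q$ one uses the hypothesised primitive $\mathcal Q$). The identity $\tfrac{1}{\ep}F(x,x/\ep)=\text{div}_x[G(x,x/\ep)]-(\text{div}_x G)(x,x/\ep)$ then gives, after integration by parts against any $\phi\in V$,
\beqq
\left|\int_\Omega F(x,x/\ep)\,h(t,x)\,\phi(x)\,dx\right|\le c\,\ep\,\|h\|_{C^1}\|\phi\|_V,
\eeqq
while the cutoff-induced extra terms $\nabla\eta^\ep\cdot(\cdots)$ and $(1-\eta^\ep)(\cdots)$ are supported on a strip of width $\ep$ and contribute at most $c\,\ep^{1/2}\|\phi\|_V$. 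Hence $|\langle r_k^\ep,\phi\rangle|\le c\,\ep^{1/2}\|\phi\|_V$ uniformly in $t$.

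\textbf{Step 3 (coupled energy estimate and Gronwall).} Set $z_k^\ep:=u_k^\ep-\tilde w_k^\ep\in L^2(0,T;V)$, noting $z_k^\ep(0,\cdot)=-\ep\,\eta^\ep u_{k1}(0,\cdot,\cdot/\ep)=O(\ep)$ in $H$, and subtract to obtain
\beqq
\mathcal{C}_{kk}^\ep\partial_t z_k^\ep-\text{div}(\kappa_k^\ep\nabla z_k^\ep)-\tfrac1\ep Q^\ep(z_{3-k}^\ep-z_k^\ep)=-r_k^\ep,\qquad k=1,2.
\eeqq
Testing the $k=1$ equation with $z_1^\ep$, the $k=2$ equation with $z_2^\ep$, and summing produces
\beqq
\tfrac12\tfrac{d}{dt}\!\int_\Omega\!\sum_k \mathcal{C}_{kk}^\ep(z_k^\ep)^2\,dx+\sum_k\!\int_\Omega\!\kappa_k^\ep|\nabla z_k^\ep|^2\,dx+\tfrac{1}{\ep}\!\int_\Omega\! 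Q^\ep(z_1^\ep-z_2^\ep)^2\,dx=-\sum_k\!\int_\Omega\!r_k^\ep z_k^\ep\,dx.
\eeqq
The singular interaction integral on the left has no definite sign, but using $\tfrac1\ep Q^\ep=\text{div}_x[\mathcal{Q}(\cdot,\cdot/\ep)]-(\text{div}_x\mathcal{Q})(\cdot,\cdot/\ep)$ and integrating by parts (legitimate since $z_k^\ep\in V$) bounds its absolute value by
\beqq
2\|\mathcal Q\|_\infty\|z_1^\ep-z_2^\ep\|_H\,\|\nabla(z_1^\ep-z_2^\ep)\|_H+\|\text{div}_x\mathcal Q\|_\infty\|z_1^\ep-z_2^\ep\|_H^2,
\eeqq
which, via Young's inequality and $\underline\kappa>0$, is absorbed into $\sum_k\!\int\kappa_k^\ep|\nabla z_k^\ep|^2$. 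Combined with the residual bound $|\langle r_k^\ep,z_k^\ep\rangle|\le c\,\ep^{1/2}\|z_k^\ep\|_V$ (once more absorbed by Young) and the $O(\ep)$ initial data, Gronwall's inequality yields $\|z_k^\ep\|_{L^\infty(0,T;H)}+\|\nabla z_k^\ep\|_{L^2(0,T;H)}\le c\,\ep^{1/2}$. Since $\nabla u_k^\ep-\nabla u_{k0}-\nabla_y u_{k1}(\cdot,\cdot,\cdot/\ep)$ differs from $\nabla z_k^\ep$ by $\ep\,\nabla_x u_{k1}(\cdot,\cdot,\cdot/\ep)=O(\ep)$ and the cutoff-induced $O(\ep^{1/2})$ term from Step~1, the theorem follows. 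The main obstacle is precisely the singular $\tfrac1\ep Q^\ep$ cross-term in the energy identity: controlling it requires both the primitive $\mathcal{Q}$ and the cancellation arising from summing the two coupled energy identities, a difficulty absent from scalar homogenization.
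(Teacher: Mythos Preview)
Your proposal is correct and follows essentially the same route as the paper's proof: define the first-order corrector, introduce a boundary-layer cutoff to obtain test functions in $V$, show the residual is $O(\ep^{1/2})$ in $V'$ (with the zero-mean oscillating remainders handled via $\div_y$-potentials, and the $\ep^{1/2}$ loss coming solely from the cutoff strip), and close by an energy estimate in which the indefinite $\tfrac1\ep Q^\ep(z_1^\ep-z_2^\ep)^2$ term is controlled using the primitive $\mathcal Q$ exactly as you indicate. The only cosmetic difference is that the paper uses an exponential weight $e^{-\lambda t}$ with $\lambda$ large in place of Gronwall, which is an equivalent device for absorbing the leftover $L^2$ lower-order terms.
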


We prove Theorems \ref{weakconv} and \ref{hom_error} in Sections 3 and 4 respectively.
\section{Proof of homogenization convergence}\label{sec:3}
In this section, we prove Theorem \ref{weakconv} on homogenization convergence for the solution of the two scale multi-continuum system \eqref{eq:main1}.
 From (\ref{eq:unibound}), there exists a subsequence of $(u_1^\epsilon,u_2^\epsilon)$, which we still denote by $(u_1^\epsilon, u_2^\epsilon)$ , $u_{10}$ and $u_{20}$ such that
\begin{equation}
\label{eq:main74}
 \begin{split}
u_1^{\epsilon} \rightharpoonup u_{10}, \ u_2^{\epsilon} \rightharpoonup u_{20} 
\enspace \textrm{in} \enspace L^2(0,T;V).
 \end{split}
 \end{equation}
%
%
%
%
%
%
 We show that $(u_{10},u_{20})$ satisfies the homogenized problem \eqref{eq:main9}.
 Recall ${N^i_1}$, ${N^i_2}$, $M_1$ and $M_2$ in $H^1_{\#}(Y)$ as functions of $y$ are the solutions of the cell problems (\ref{eq:cell}).
Fixing $i=1,\ldots,d$, we consider 
\beq
\omega_1^\ep(x) = {x_i} +\ep N^i_1(x,\frac{x}{\epsilon})\ \  \mbox{and}\ \  \omega_2^\ep(x) = {x_i} +\ep N^i_2(x,\frac{x}{\epsilon}).
\eeq
Under regularity conditions for $\kappa_1$, $\kappa_2$, $N_1^i$ and $N_2^i$, we have 
 \begin{equation}
\label{eq:main81'''}
 \begin{split}
&-\div(\kappa_1^\epsilon(x)\nabla \omega^\epsilon_1(x)) \\
&= -\frac{1}{\epsilon} 
\text{div}_y(\kappa_1(x,\frac{x}{\epsilon}) (e^i + \nabla_y N_1^i (x,\frac{x}{\epsilon})))
-\epsilon\div_x (\kappa_1(x,\frac{x}{\epsilon})\nabla_xN_1^i(x,\frac{x}{\epsilon}))\\
&- \div_x(\kappa_1(x,\frac{x}{\epsilon})(e^i + \nabla_y N_1^i (x,\frac{x}{\epsilon})))
-\div_y(\kappa_1(x,\frac{x}{\epsilon})\nabla_x N_1^i(x,\frac{x}{\epsilon}))\\
 \end{split}
 \end{equation}
 and
 \begin{equation}
\label{eq:main81'''*}
 \begin{split}
&-\div(\kappa_2^\epsilon(x)\nabla \omega^\epsilon_2(x)) \\
&= -\frac{1}{\epsilon} 
\text{div}_y(\kappa_2(x,\frac{x}{\epsilon}) (e^i + \nabla_y N_2^i (x,\frac{x}{\epsilon})))
-\epsilon\div_x (\kappa_2(x,\frac{x}{\epsilon})\nabla_xN_2^i(x,\frac{x}{\epsilon}))\\
&- \div_x(\kappa_2(x,\frac{x}{\epsilon})(e^i + \nabla_y N_2^i (x,\frac{x}{\epsilon})))
-\div_y(\kappa_2(x,\frac{x}{\epsilon})\nabla_x N_2^i(x,\frac{x}{\epsilon})).
 \end{split}
 \end{equation}
Let $\phi \in \mathcal{C}^\infty_0(\Omega)$. From (\ref{eq:main65}), we have
\begin{equation}
\label{eq:main82*}
\int_{\Omega}{\mathcal C}_{11}^\epsilon{\partial u_1^\epsilon  \over \partial t}\phi\omega_1^\epsilon  dx
+ \int_{\Omega} \kappa_1^\epsilon\nabla u_1^\epsilon \cdot \nabla (\phi\omega_1^\epsilon)  dx
-\int_{\Omega} {1 \over \epsilon}Q^\epsilon(u_2^\epsilon-u_1^\epsilon)\phi\omega_1^\epsilon dx
=\int_{\Omega}q \phi\omega_1^\epsilon  dx,
\end{equation}
and
\begin{equation}
\label{eq:main82a}
\int_{\Omega}{\mathcal C}_{22}^\epsilon{\partial u_2^\epsilon  \over \partial t} \phi\omega_2^\epsilon  dx
+ \int_{\Omega} \kappa_2^\epsilon\nabla u_2^\epsilon \cdot \nabla (\phi\omega_2^\epsilon)  dx
-\int_{\Omega} {1 \over \epsilon}Q^\epsilon(u_1^\epsilon-u_2^\epsilon)\phi\omega_2^\epsilon  dx
= \int_{\Omega}q \phi\omega_2^\epsilon  dx.
 \end{equation}
 Multiplying (\ref{eq:main81'''}) and (\ref{eq:main81'''*}) by $\phi u_1^\epsilon$ and $\phi u_2^\epsilon$ respectively and integrate over $\Omega$ we have
 \begin{equation}
 \begin{split}
\label{eq:main82**}
&\int_{\Omega} \kappa_1^\epsilon\nabla \omega_1^\epsilon \cdot \nabla (\phi u_1^\epsilon)  dx
= -\epsilon \int_{\Omega} \div_x (\kappa_1(x,\xoe) \nabla_x N_1^i(x,\xoe)) \phi u_1^\epsilon dx\\
&- \int_{\Omega} \div_x(\kappa_1(x,\xoe)(e^i + \nabla_y N_1^i (x,\xoe)))\phi u_1^\epsilon  dx
-\int_{\Omega} \div_y( \kappa_1(x,\xoe)\nabla_x N_1^i(x,\xoe) )\phi u_1^\epsilon  dx,
 \end{split}
 \end{equation}
 and
  \begin{equation}
\label{eq:main82***}
 \begin{split}
  &\int_{\Omega} \kappa_2^\epsilon\nabla \omega_2^\epsilon \cdot \nabla(\phi u_2^\epsilon)  dx
= - \epsilon \int_{\Omega} \div_x( \kappa_2(x,\xoe)\nabla_x N_2^i(x,\xoe))\phi u_2^\epsilon  dx\\
&- \int_{\Omega} \div_x(\kappa_2(x,\xoe)(e^i + \nabla_y N_2^i (x,\xoe)))\phi u_2^\epsilon  dx
- \int_{\Omega} \div_y (\kappa_2(x,\xoe)\nabla_x N_2^i(x,\xoe)) \phi u_2^\epsilon  dx.
  \end{split}
 \end{equation}
 Let $\psi\in C^\infty_0(0,T)$. 
 Subtracting (\ref{eq:main82**}), (\ref{eq:main82***}) from (\ref{eq:main82*}) and \eqref{eq:main82a} respectively, we obtain
 \begin{equation}
\label{eq:main82'}
 \begin{split}
&\int_0^T\int_\Omega {\mathcal C}_{11}^\epsilon{\partial u_1^\epsilon\over \partial t} \phi\psi\omega_1^\epsilon  dx dt
+ \int_0^T\int_\Omega \kappa_1^\epsilon \nabla u_1^\epsilon \cdot\nabla \phi \omega_1^\epsilon \psi  dx dt
- \int_0^T\int_{\Omega} {1 \over \epsilon}Q^\epsilon(u_2^\epsilon-u_1^\epsilon)\phi\omega_1^\epsilon \psi  dx dt\\
&\qquad\qquad-  \int_0^T\int_\Omega \kappa_1^\epsilon \nabla \omega_1^\epsilon\cdot (\nabla\phi u_1^\epsilon)\psi  dx  dt\\
&= \int_0^T\int_\Omega q \phi \omega_1^\epsilon\psi dxdt
+\epsilon\int_0^T \int_{\Omega} \div_x \left(\kappa_1(x,{x\over\ep}) \nabla_x N_1^i(x,{x\over\ep})\right) \phi u_1^\epsilon\psi dx dt\\
&+ \int_0^T\int_{\Omega} \div_x\left(\kappa_1(x,{x\over\ep})(e^i + \nabla_y N_1^i(x,{x\over\ep}))\right)\phi u_1^\epsilon\psi  dx dt
+\int_0^T\int_{\Omega} \div_y\left( \kappa_1(x,{x\over\ep})\nabla_x N_1^i(x,{x\over\ep}) \right)\phi u_1^\epsilon \psi dx dt
\end{split}
\end{equation}
and
 \begin{equation}
\label{eq:main82'*}
 \begin{split}
&\int_0^T\int_\Omega {\mathcal C}_{22}^\epsilon{\partial u_2^\epsilon\over \partial t} \phi\psi\omega_2^\epsilon  dx dt
+ \int_0^T\int_\Omega \kappa_2^\epsilon \nabla u_2^\epsilon\cdot \nabla \phi \omega_2^\epsilon \psi  dx dt
- \int_0^T\int_{\Omega} {1 \over \epsilon}Q^\epsilon(u_1^\epsilon-u_2^\epsilon)\phi\omega_2^\epsilon \psi  dx dt\\
&\qquad\qquad-  \int_0^T\int_\Omega \kappa_2^\epsilon \nabla \omega_2^\epsilon\cdot (\nabla\phi u_2^\epsilon)\psi  dx  dt\\
&= \int_0^T\int_\Omega q \phi \omega_2^\epsilon\psi  dx dt
+\epsilon\int_0^T \int_{\Omega} \div_x \left(\kappa_2(x,{x\over\ep}) \nabla_x N_2^i(x,{x\over\ep})\right) \phi u_2^\epsilon\psi dx dt\\
&+ \int_0^T\int_{\Omega} \div_x\left(\kappa_2(x,{x\over\ep})(e^i + \nabla_y N_2^i(x,{x\over\ep}) )\right)\phi u_2^\epsilon\psi  dx dt
+\int_0^T\int_{\Omega} \div_y\left( \kappa_2(x,{x\over\ep})\nabla_x N_2^i(x,{x\over\ep}) \right)\phi u_2^\epsilon \psi dx dt.
\end{split}
\end{equation}
We have the following lemma.
\begin{lemma}
\label{lemma4}
The functions $\int_0^T\psi(t)u_1^\ep(t,x)dt$ and  $\int_0^T\psi(t)u_2^\ep(t,x)dt$ converge strongly in $H$ to \\ $\int_0^T\psi(t) u_{10}(t,x)dt$
and $\int_0^T\psi(t) u_{20}(t,x)dt$ respectively, for $\psi \in C^\infty_0 (0,T) $.
\end{lemma}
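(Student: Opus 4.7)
The plan is to obtain the conclusion as a routine consequence of the Aubin--Lions--Simon compactness lemma applied to the uniform bound (1.10), followed by a one-line Cauchy--Schwarz estimate in time. Specifically, I would argue that the sequences $(u_1^\ep)$ and $(u_2^\ep)$ are in fact strongly relatively compact in $L^2(0,T;H)$, and that the claim of the lemma then follows because convolution against a fixed $\psi \in C^\infty_0(0,T)$ is a bounded linear operator from $L^2(0,T;H)$ to $H$.

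The first step is to invoke (1.10): $(u_i^\ep)_\ep$ is uniformly bounded in $L^2(0,T;V)\cap H^1(0,T;V')$ for $i=1,2$. Since $V=H^1_0(\Omega)$ embeds compactly into $H=L^2(\Omega)$ (Rellich) and $H$ embeds continuously into $V'=H^{-1}(\Omega)$, the Aubin--Lions--Simon lemma gives that the embedding
\beqq
L^2(0,T;V) \cap H^1(0,T;V') \hookrightarrow L^2(0,T;H)
\eeqq
is compact. Therefore, from every subsequence of $(u_i^\ep)$ one can extract a further subsequence converging strongly in $L^2(0,T;H)$. Combined with the weak convergence (2.18) in $L^2(0,T;V) \subset L^2(0,T;H)$, the strong limit must coincide with $u_{i0}$, so the whole sequence chosen in (2.18) in fact converges strongly: $u_i^\ep \to u_{i0}$ in $L^2(0,T;H)$.

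With strong convergence in hand, the final step is just Cauchy--Schwarz in time: for each $\ep$,
\beqq
\left\lVert \int_0^T \psi(t)\bigl(u_i^\ep(t,\cdot)-u_{i0}(t,\cdot)\bigr)\,dt \right\rVert_H
\le \int_0^T |\psi(t)|\,\lVert u_i^\ep(t,\cdot)-u_{i0}(t,\cdot)\rVert_H\,dt
\le \lVert \psi\rVert_{L^2(0,T)}\,\lVert u_i^\ep - u_{i0}\rVert_{L^2(0,T;H)},
\eeqq
which tends to zero as $\ep\to 0$. This yields the strong convergence in $H$ claimed in the lemma.

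There is really no serious obstacle here; the only point to be careful about is that the compactness argument initially produces only a subsequence, and one must then reconcile this with the subsequence already fixed in (2.18). This is automatic because the weak $L^2(0,T;V)$ limit is $u_{i0}$, so any $L^2(0,T;H)$-strong cluster point must equal $u_{i0}$ and a standard subsequence-of-subsequence argument upgrades the convergence to the whole sequence under consideration.
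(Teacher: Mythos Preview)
Your proof is correct, but it takes a different route from the paper's. The paper does not invoke Aubin--Lions at all: instead it observes directly that the map $u\mapsto\int_0^T\psi(t)u(t,\cdot)\,dt$ sends $L^2(0,T;V)$ boundedly into $V$, so the uniform bound on $u_i^\ep$ in $L^2(0,T;V)$ alone makes $\int_0^T\psi(t)u_i^\ep(t,\cdot)\,dt$ bounded in $V$; Rellich then gives a subsequence converging strongly in $H$, and the limit is identified via the weak convergence $u_i^\ep\rightharpoonup u_{i0}$ tested against $\phi(x)\psi(t)$. This is more elementary in that it uses only the $L^2(0,T;V)$ bound and a single application of Rellich, bypassing the time-derivative bound entirely. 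Your approach, by contrast, spends the $H^1(0,T;V')$ bound via Aubin--Lions to obtain the stronger intermediate fact that $u_i^\ep\to u_{i0}$ strongly in $L^2(0,T;H)$; the lemma then follows trivially. What you gain is a genuinely stronger compactness statement that could streamline several later limit passages in Section~3 (e.g.\ products of $u_i^\ep$ with weakly converging oscillatory coefficients), at the cost of importing a heavier lemma.
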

{\it Proof\ \ } This is the standard result in Jikov et al. \cite{jikov2012homogenization}. As $u_1^\ep$ is uniformly bounded in $L^2(0,T;V)$, we have that $\int_0^T\psi(t)u_1^\ep(t,x) dt$ is uniformly bounded in $V$. Thus we can extract a subsequence which converges weakly in $V$ and strongly in $H$. As for all $\phi\in C^\infty_0(\Omega)$, 
\[
\int_\Omega\int_0^T\psi(t)u_1^\ep(t,x)\phi(x) dt dx\to \int_\Omega\int_0^T\psi(t)u_{10}(t,x)\phi(x) dt dx,
\]
the limit is $\int_\Omega\psi(t)u_{10}(t,x) dt$.\hfill$\Box$\\
We have
\[
\begin{split}
\int_0^T\int_\Omega C_{11}^\ep{\partial u_1^\ep\over\partial t}\phi\psi\omega_1^\ep  dx dt=-\int_\Omega C_{11}^\ep\left(\int_0^T u_1^\ep{\partial\psi\over\partial t}dt\right)\phi\omega_1^\ep  dx.
\end{split}
\]
{Note that} $C_{11}^\ep$ converges weakly to $\int_YC_{11}(x,y) dy$ in $H$, and $\int_0^Tu_1^\ep{\partial\psi\over\partial t} dt$ converges strongly to $\int_0^Tu_{10}{\partial\psi\over\partial t} dt$ in $H$. Thus 
\[
\begin{split}
\lim_{\ep\to 0}\int_0^T\int_\Omega C_{11}^\ep{\partial u_1^\ep\over\partial t}\phi\psi\omega_1^\ep  dx dt=
-\int_0^T\int_\Omega\left(\int_YC_{11}(x,y)dy\right)u_{10}{\partial\psi\over\partial t}\phi x_i  dx dt \\
=\int_0^T\int_\Omega\left(\int_YC_{11}(x,y)dy\right){\partial u_{10}\over\partial t}\psi\phi x_i  dx dt.
\end{split}
\]
Note that we have 
\begin{equation}
\label{eq:main82''}
 \begin{split}
 \kappa_1^\epsilon(x) \nabla \omega_1^\epsilon(x) 
 = \kappa_1(x,\frac{x}{\epsilon})\big((e^i + \nabla_y N_1^i (x,\frac{x}{\epsilon}))+ \epsilon \nabla_x N^i_1(x,\frac{x}{\epsilon})\big),\\
 \kappa_2^\epsilon(x) \nabla \omega_2^\epsilon(x)
 = \kappa_2(x,\frac{x}{\epsilon})\big((e^i + \nabla_yN_2^i (x,\frac{x}{\epsilon}))+ \epsilon \nabla_x N^i_2(x,\frac{x}{\epsilon})\big).
  \end{split}
 \end{equation}
 Also, note that due to $Y$-periodicity of $\kappa$ and $N^i$, we have
\begin{equation}
\label{eq:main82'''}
 \begin{split}
&\kappa_1(x,\frac{x}{\epsilon}) (e^i + \nabla_y N_1^i (x,\frac{x}{\epsilon}))
 \rightharpoonup \int_Y \kappa_1(x, y)(e^i + \nabla_y N_1^i (x,y))  dy,\\
&\kappa_2(x,\frac{x}{\epsilon}) (e^i + \nabla_y N_2^i (x,\frac{x}{\epsilon}))
 \rightharpoonup \int_Y \kappa_2(x, y)(e^i + \nabla_y N_2^i (x,y))  dy
\enspace \textrm{in} \enspace H.
  \end{split}
 \end{equation}
Passing to the limit in (\ref{eq:main82'}), (\ref{eq:main82'*}),
we obtain from Lemma \ref{lemma4},
\begin{equation}
\label{eq:main82''''}
 \begin{split}
 \int_0^T \int_\Omega \left(\int_Y {\mathcal C}_{11}  dy\right){\partial u_{10} \over \partial t} \phi \psi x_i  dx  dt
- \int_0^T\int_\Omega \left(\int_Y \kappa_1(e^i + \nabla_y N_1^i)  dy\right) \cdot\nabla \phi \psi u_{10}  dx dt\\
+ \displaystyle \lim_{\epsilon \to 0 }\left(\int_0^T\int_\Omega \kappa_1^\epsilon \nabla u_1^\epsilon \cdot\nabla \phi\psi \omega_1^\ep  dx  dt
-{1 \over \epsilon}\int_0^T\int_{\Omega} Q^\epsilon(u_2^\epsilon-u_1^\epsilon)\phi \omega_1^\ep \psi  dx dt\right)\\
=\int_0^T\int_\Omega q \phi x_i \psi  dx dt
+ \int_0^T\int_{\Omega} \div\left(\int_Y \kappa_1(e^i + \nabla_y N_1^i ) dy\right)\phi u_{10}\psi  dx dt\\
 \end{split}
 \end{equation}
 and
 \begin{equation}
\label{eq:main83}
 \begin{split}
  \int_0^T \int_\Omega\left( \int_Y {\mathcal C}_{22}  dy\right){\partial u_{20} \over \partial t} \phi \psi x_i  dx  dt
- \int_0^T\int_\Omega\left( \int_Y \kappa_2(e^i + \nabla_y N_2^i)  dy\right) \cdot\nabla \phi \psi u_{20}  dx dt\\
+ \displaystyle \lim_{\epsilon \to 0 } 
\left(\int_0^T\int_\Omega \kappa_2^\epsilon \nabla u_2^\epsilon \cdot\nabla \phi\psi \omega_2^\ep  dx  dt
-{1\over\epsilon}\int_0^T\int_{\Omega} Q^\epsilon(u_1^\epsilon-u_2^\epsilon)\phi \omega_2^\ep\psi  dx dt\right)\\
=\int_0^T\int_\Omega q \phi x_i \psi  dx dt
+ \int_0^T\int_{\Omega} \div\left(\int_Y \kappa_2(e^i + \nabla_y N_2^i ) dy\right)\phi u_{20}\psi  dx dt.\\
 \end{split}
 \end{equation}
Letting $\phi_1$ and $\phi_2$ in (\ref{eq:main65}) be $\phi x_i$ for $\phi\in C^\infty_0(\Omega)$, we get
\begin{equation}
\label{eq:main68''}
 \begin{split}
 \int_0^T\int_{\Omega}{\mathcal C}_{11}^\epsilon{\partial u_1^\epsilon  \over \partial t} \phi \psi x_i  dx  dt
+  \int_0^T\int_{\Omega} \kappa_1^\epsilon\nabla u_1^\epsilon \cdot \nabla (\phi x_i)\psi  dx dt
- {1 \over \epsilon}\int_0^T\int_{\Omega} Q^\epsilon(u_2^\epsilon-u_1^\epsilon) \phi \psi x_i  dx dt\\
=  \int_0^T\int_{\Omega}q \phi\psi  x_i  dx dt.
 \end{split}
 \end{equation} 
 Passing to the limit when $\ep\to 0$, we obtain
\begin{equation}
\label{eq:main84'''}
 \begin{split}
\int_0^T\int_\Omega\left( \int_Y {\mathcal C}_{11}  dy\right){\partial u_{10} \over \partial t} \phi\psi x_i  dx dt
+\displaystyle \lim_{\epsilon \to 0} 
\left(\int_0^T\int_\Omega \kappa_1^\epsilon \nabla u_1^\epsilon\cdot\nabla (\phi x_i)\psi   dx dt
-{1 \over \epsilon}\int_0^T\int_{\Omega} Q^\epsilon(u_2^\epsilon-u_1^\epsilon) \phi \psi x_i  dx dt\right)\\
=\int_0^T \int_\Omega q \phi x_i\psi   dx dt.
 \end{split}
 \end{equation}
 Subtracting (\ref{eq:main84'''}) from  (\ref{eq:main82''''}), one obtains
 \beq
 \bsp
&- \int_0^T\int_\Omega\left( \int_Y \kappa_1(e^i + \nabla_y N_1^i)  dy\right) \cdot\nabla \phi \psi u_{10}  dx dt\\
&- \displaystyle \lim_{\epsilon \to 0 }\left(\int_0^T\int_\Omega \kappa_1^\epsilon \nabla u_1^\epsilon \cdot e^i \phi \psi   dx  dt
+\int_0^T\int_{\Omega} Q^\epsilon(u_2^\epsilon-u_1^\epsilon) N_1^i(x,{x\over\ep})\phi \psi  dx dt\right)\\
&=- \int_0^T\int_{\Omega}\left( \int_Y \kappa_1(e^i + \nabla_y N_1^i ) dy\right)\cdot \nabla(u_{10}\phi)\psi  dx dt.\\
\end{split}
\eeq
Using Lemma \ref{lemma4}, we get
 \beq
 \bsp
& -\int_0^T\int_\Omega\left( \int_Y \kappa_1(e^i + \nabla_y N_1^i)  dy\right) \cdot\nabla \phi \psi u_{10}  dx dt\\
&- \displaystyle \lim_{\epsilon \to 0 }\int_0^T\int_\Omega \kappa_1^\epsilon \nabla u_1^\epsilon \cdot e^i \phi \psi   dx  dt
-\int_0^T\int_{\Omega}\left( \int_Y Q N_1^i  dy\right) (u_{20}-u_{10}) \phi \psi  dx dt\\
&=-\int_0^T\int_{\Omega} \int_Y \kappa_1(e^i + \nabla_y N_1^i ) dy \cdot\nabla (u_{10}\phi)\psi  dx dt.\\
\end{split}
\eeq
 From this, we have
 \begin{equation}
\label{eq:main84''''}
 \begin{split}
  &\displaystyle \lim_{\epsilon \to 0 }
\int_0^T\int_\Omega \kappa_1^\epsilon \nabla u_1^\epsilon \cdot e^i \phi\psi dx  dt=
-\int_0^T\int_\Omega \left(\int_Y \kappa_1(e^i + \nabla_y N_1^i)  dy\right) \cdot\nabla \phi \psi u_{10}  dx dt\\
&+\int_0^T\int_{\Omega}\left( \int_Y \kappa_1(e^i + \nabla_y N_1^i ) dy\right) \cdot \nabla(u_{10}\phi)\psi  dx dt
- \int_0^T\int_{\Omega}\left( \int_Y Q N_1^i  dy \right)(u_{20}-u_{10}) \phi \psi  dx  dt\\
&= \int^T_0\int_\Omega \left(\int_Y \kappa_1(e^i + \nabla_y N_1^i) dy\right)\cdot\nabla u_{10} \phi \psi dx dt
+ \int_0^T\int_{\Omega}\left( \int_Y \kappa_1 \nabla_y M_1 \cdot e^idy\right) (u_{20}-u_{10}) \phi\psi  dx  dt,\\
\end{split}
 \end{equation}
where we use (\ref{eq:cell} (a),(b)) for the last term of \eqref{eq:main84''''}.
 Similarly,
  \begin{equation}
\label{eq:main84*}
 \begin{split}
&\displaystyle \lim_{\epsilon \to 0 }
\int_0^T\int_\Omega \kappa_2^\epsilon \nabla u_2^\epsilon \cdot e^i \phi\psi dx  dt\\
&= \int^T_0\int_\Omega \left(\int_Y \kappa_2(e^i + \nabla_y N_2^i) dy\right)\cdot\nabla u_{20} \phi  dx\psi dt
+ \int_0^T\int_{\Omega} \left(\int_Y \kappa_2 \nabla_y M_2 \cdot e^i  dy\right) (u_{10}-u_{20}) \phi \psi  dx  dt.\\
 \end{split}
 \end{equation}
 From \eqref{eq:main84''''}, one obtains
 \begin{equation}
\label{eq:main95-1}
 \begin{split}
&  \displaystyle \lim_{\epsilon \to 0}
  \int_0^T\int_\Omega \kappa_1^\epsilon \nabla u_1^\epsilon \cdot \nabla \phi\psi   dx   dt\\
&  =  \int^T_0\int_\Omega \left(\int_Y \kappa_1(e^i + \nabla_y N_1^i) dy\right)\cdot\nabla u_{10} \frac{\partial \phi}{\partial x_i}\psi dx dt
+ \int_0^T\int_{\Omega}\left( \int_Y \kappa_1 \nabla_y M_1 \cdot e^i  dy\right) (u_{20}-u_{10})  \frac{\partial \phi}{\partial x_i}\psi  dx dt\\
&=\int_0^T\int_\Omega  \kappa_1^{*} \nabla u_{10} \cdot \nabla \phi\psi   dx dt+ \int_0^T\int_{\Omega}\left( \int_Y \kappa_1 \nabla_y M_1   dy\right) \cdot \nabla \phi\psi (u_{20}-u_{10})  dx   dt,
 \end{split}
 \end{equation}
where we have used the standard result on the symmetry of the homogenized coefficient $\kappa_1^*$ defined in \eqref{eq:main10} (see, e.g., \cite{papanicolau1978asymptotic}).
Similarly, we deduce
 \begin{equation}
\label{eq:main96}
 \begin{split}
&  \displaystyle \lim_{\epsilon \to 0}
  \int_0^T\int_\Omega \kappa_2^\epsilon \nabla u_2^\epsilon \cdot \nabla \phi\psi   dx   dt\\
&  = \int_0^T\int_\Omega  \kappa_2^{*} \nabla u_{20}\cdot \nabla \phi\psi   dx dt
 + \int_0^T\int_{\Omega}\left( \int_Y \kappa_2 \nabla_y M_2   dy\right) \cdot \nabla \phi\psi (u_{10}-u_{20})  dx   dt,
 \end{split}
 \end{equation}
where  $\kappa_2^*$ is defined in \eqref{eq:main10}. 
We define $\gamma_1^\epsilon$ and $\gamma_2^\epsilon$ as 
 \begin{equation}
  \label{eq:main100}
 \begin{split}
\gamma_1^\epsilon(x) = \epsilon M_1(x,\frac{x}{\epsilon}),\
 \gamma_2^\epsilon(x) = \epsilon M_2(x,\frac{x}{\epsilon}) .
 \end{split}
\end{equation}
Under the smoothness conditions for $\kappa_1$, $M_1$, we have 
 \begin{equation}
\label{eq:main101}
 \begin{split}
-\text{div}(\kappa_1^\epsilon(x)\nabla \gamma^\epsilon_1(x)) 
= -\frac{1}{\epsilon}
\text{div}_y(\kappa_1(x,\frac{x}{\epsilon}) \nabla_y M_1 (x,\frac{x}{\epsilon})) 
-\epsilon \div_x (\kappa_1(x,\frac{x}{\epsilon})\nabla_xM_1(x,\frac{x}{\epsilon})) \\
-\div_x(\kappa_1(x,\frac{x}{\epsilon})\nabla_y M_1 (x,\frac{x}{\epsilon}))
-\div_y(\kappa_1(x,\frac{x}{\epsilon})\nabla_x M_1(x,\frac{x}{\epsilon})).\\
 \end{split}
 \end{equation}
 Letting $\phi_1(x) = \phi(x)\gamma_1^\epsilon(x)$
 where $\phi \in \mathcal{C}^\infty_0(\Omega)$ in (\ref{eq:main65}), we obtain
\begin{equation}
\label{eq:main102}
 \begin{split}
\int_{\Omega}{\mathcal C}_{11}^\epsilon{\partial u_1^\epsilon  \over \partial t}\phi\gamma_1^\epsilon  dx
+ \int_{\Omega} \kappa_1^\epsilon\nabla u_1^\epsilon \cdot \nabla (\phi\gamma_1^\epsilon)  dx
-\int_{\Omega} {1 \over \epsilon}Q^\epsilon(u_2^\epsilon-u_1^\epsilon)\phi\gamma_1^\epsilon dx
=\int_{\Omega}q \phi\gamma_1^\epsilon  dx.\\
 \end{split}
 \end{equation}
 Let $\psi\in C_0^\infty((0,T))$. From (\ref{eq:main101}) we have
\beq
\label{eq:main102'}
\bsp
\int^T_0\int_{\Omega}\div(\kappa_1^\ep \nabla \gamma_1^\ep) \phi(x) u_1^\ep\psi(t)   dx dt
= -\frac{1}{\epsilon}\int^T_0\int_\Omega Q(x,\xoe)\phi(x) u_1^\ep\psi(t) dx dt\\
+\epsilon\int^T_0\int_\Omega\div_x \big(\kappa_1(x,\frac{x}{\epsilon})\nabla_xM_1(x,\frac{x}{\epsilon})\big)\phi(x) u_1^\ep\psi(t) dx dt
+\int^T_0\int_\Omega \div_x\big(\kappa_1(x,\frac{x}{\epsilon})\nabla_y M_1 (x,\frac{x}{\epsilon})\big)\phi(x) u_1^\ep\psi(t) dx  dt
\\+\int^T_0\int_\Omega\div_y\big(\kappa_1(x,\frac{x}{\epsilon})\nabla_x M_1(x,\frac{x}{\epsilon})\big)\phi(x) u_1^\ep\psi(t) dx dt.\\
\end{split}
\eeq
Adding \eqref{eq:main102} and (\ref{eq:main102'}), we have
 \begin{equation}
\label{eq:main103}
 \begin{split}
\int^T_0\int_{\Omega}{\mathcal C}_{11}^\epsilon{\partial u_1^\epsilon  \over \partial t}\phi\gamma_1^\epsilon\psi  dx dt
+\int^T_0 \int_{\Omega} \kappa_1^\epsilon\nabla u_1^\epsilon \cdot \nabla (\phi\gamma_1^\epsilon)\psi  dx dt
-\int^T_0\int_{\Omega} {1 \over \epsilon}Q^\epsilon(u_2^\epsilon-u_1^\epsilon)\phi\gamma_1^\epsilon\psi  dx dt\\
+\int^T_0\int_{\Omega}\div(\kappa_1^\ep \nabla \gamma_1^\ep) \phi u_1^\ep\psi   dx dt
=\int^T_0\int_{\Omega}q \phi\gamma_1^\epsilon\psi  dx  dt
-\frac{1}{\epsilon}\int^T_0\int_\Omega Q(x,\xoe)\phi u_1^\ep\psi dx dt\\
+\epsilon\int^T_0\int_\Omega\div_x \big(\kappa_1(x,\frac{x}{\epsilon})\nabla_xM_1(x,\frac{x}{\epsilon})\big)\phi u_1^\ep\psi dx dt
+\int^T_0\int_\Omega \div_x\big(\kappa_1(x,\frac{x}{\epsilon})\nabla_y M_1 (x,\frac{x}{\epsilon})\big)\phi u_1^\ep\psi dx dt
\\+\int^T_0\int_\Omega\div_y\big(\kappa_1(x,\frac{x}{\epsilon})\nabla_x M_1(x,\frac{x}{\epsilon})\big)\phi u_1^\ep\psi dx dt.\\
 \end{split}
 \end{equation}
We note that on the left hand side of \eqref{eq:main103}, 
\begin{eqnarray*}
\int_0^T\int_\Omega\kappa_1^\ep\nabla u_1^\ep\cdot\nabla(\phi\gamma_1^\ep)\psi dxdt+\int_0^T\int_\Omega\div(\kappa_1^\ep\nabla\gamma_1^\ep)\phi u_1^\ep\psi dxdt=\\\int_0^T\int_\Omega\kappa_1^\ep\nabla u_1^\ep\cdot\nabla\phi\gamma_1^\ep\psi dxdt-\int_0^T\int_\Omega\kappa_1^\ep\nabla\gamma_1^\ep\cdot\nabla\phi u_1^\ep dxdt.
\end{eqnarray*}
 Passing \eqref{eq:main103} to the limit, using Lemma \ref{lemma4}, one obtains
   \begin{equation}
\label{eq:main103''}
 \begin{split}
& -\int_0^T\int_\Omega\left(\int_Y \kappa_1(x,y)\nabla_yM_1(x.y)dy\right)\cdot\nabla\phi(x) u_{10}\psi(t) dxdt\\
&-\int_0^T\int_\Omega\left(\int_YQ(x,y)M_1(x,y)dy\right)(u_{20}-u_{10})\phi(x)\psi(t) dxdt\\
 =
& -\lim_{\ep\to 0}{1\over\ep}\int_0^T\int_\Omega Q(x,{x\over\ep})\phi(x) u_1^\ep\psi(t) dxdt 
+\int_0^T\int_\Omega\div\left(\int_Y\kappa_1(x,y)\nabla_yM_1(x,y)dy\right)\phi(x) u_{10}\psi(t) dxdt\\ &+\int_0^T\int_\Omega\left(\int_Y\div_y(\kappa_1(x,y)\nabla_xM_1(x,y))dy\right)\phi(x) u_{10}\psi(t) dxdt.
 \end{split}
 \end{equation}
 Due to periodicity, the last term on the right hand side equals 0. We thus have
  \begin{equation}
\label{eq:main104}
 \begin{split}
\displaystyle \lim_{\epsilon \to 0} 
&\frac{1}{\ep}\int^T_0 \int_{\Omega} Q(x,\xoe)  u_1^\ep \phi(x) \psi(t)  dx dt\\
&=\int^T_0\int_{\Omega} \bigg( \int_Y Q M_1 dy\bigg) (u_{20}-u_{10}) \phi\psi dx dt -\int_0^T\int_\Omega\left(\int_Y\kappa_1\nabla_yM_1dy\right)\cdot\nabla(\phi u_{10})\psi dxdt\\
&\qquad\qquad+\int_0^T\int_\Omega\left(\int_Y\kappa_1\nabla_yM_1dy\right)\cdot\nabla\phi u_{10}\psi dxdt\\
&=\int^T_0\int_{\Omega} \bigg( \int_Y Q M_1  dy\bigg) (u_{20}-u_{10}) \phi\psi dx dt-\int^T_0\int_{\Omega} \bigg( \int_Y \kappa_1 \nabla_y M_1 dy \bigg) \cdot\nabla u_{10} \phi\psi  dx dt.
 \end{split}
 \end{equation}
 
Similarly, we obtain 
 \begin{equation}
\label{eq:main105}
 \begin{split}
\displaystyle \lim_{\epsilon \to 0}&
\frac{1}{\ep}\int^T_0 \int_{\Omega} Q(x,\xoe) u_2^\ep \phi(x) \psi(t)  dx dt\\
&=\int^T_0\int_{\Omega} \bigg( \int_Y Q M_2  dy\bigg) (u_{10}-u_{20}) \phi\psi dx dt-\int^T_0\int_{\Omega} \bigg( \int_Y \kappa_2 \nabla_y M_2  dy \bigg) \cdot\nabla u_{20}\phi\psi  dx dt.\\
 \end{split}
 \end{equation}
 Thus
 \begin{equation}
\label{eq:main106'}
 \begin{split}
\displaystyle \lim_{\epsilon \to 0} 
\frac{1}{\ep} &\int^T_0  \int_{\Omega}  Q(x,\xoe) (u_2^\ep - u_1^\ep) \phi(x) \psi(t)  dx dt\\
= &- \int^T_0\int_{\Omega} \bigg( \int_Y Q(M_1 + M_2) dy\bigg) (u_{20}-u_{10}) \phi\psi dx dt\\
&-\int^T_0\int_{\Omega} \left(\int_Y \kappa_2 \nabla_y M_2 dy\right)\cdot \nabla u_{20}\phi\psi  dx dt+\int^T_0\int_{\Omega}  \left(\int_Y \kappa_1 \nabla_y M_1 dy\right)\cdot \nabla u_{10}\phi\psi  dx dt.\\
 =& - \int^T_0\int_{\Omega} \bigg( \int_Y Q(M_1+ M_2) dy\bigg) (u_{20}-u_{10}) \phi(x)\psi(t) dx dt\\
&-\int^T_0\int_{\Omega} \bigg(\int_Y \kappa_2e^i \cdot \nabla_y M_2 dy\bigg) \frac{\partial u_{20}}{\partial x_i}\phi  dx \psi  dt
+\int^T_0\int_{\Omega}  \bigg(\int_Y \kappa_1e^i \cdot  \nabla_y M_1 dy\bigg) \frac{\partial u_{10}}{\partial x_i}\phi  dx \psi dt\\
= &- \int^T_0\int_{\Omega} \bigg( \int_Y Q(M_1+ M_2) dy\bigg) (u_{20}-u_{10})\phi\psi dx dt\\
&+\int^T_0\int_{\Omega} \bigg(\int_Y \kappa_2 \nabla_y N^i_2 \cdot \nabla_y M_2 dy\bigg) \frac{\partial u_{20}}{\partial x_i}\phi\psi  dx \psi  dt
-\int^T_0\int_{\Omega}  \bigg(\int_Y \kappa_1\nabla_y N^i_1  \cdot \nabla_y M_1 dy\bigg) \frac{\partial u_{10}}{\partial x_i}\phi\psi  dx dt
\end{split}
\end{equation}
where we have used cell problems (\ref{eq:cell} (a),(c)).
Using cell problems (\ref{eq:cell} (b),(d)), we have 
 \begin{equation}
 \label{eq:main106}
\begin{split}
\lim_{\ep\to0}\frac{1}{\ep} &\int^T_0  \int_{\Omega}  Q(x,\xoe) (u_2^\ep - u_1^\ep) \phi \psi  dx dt\\
&=- \int^T_0\int_{\Omega} \bigg( \int_Y Q(M_1+ M_2) dy\bigg) (u_{20}-u_{10}) \phi\psi dx dt\\
&+\int^T_0\int_{\Omega} \bigg(\int_Y QN_2^i dy\bigg) \frac{\partial u_{20}}{\partial x_i}\phi \psi dx  dt
-\int^T_0\int_{\Omega}  \bigg(\int_YQN_1^i dy\bigg) \frac{\partial u_{10}}{\partial x_i} \phi\psi  dx dt.\\
 \end{split}
 \end{equation}
 
We are now ready to prove Theorem \ref{weakconv}.

{\it Proof of Theorem \ref{weakconv}}

From \eqref{eq:main65}
\begin{equation}
\label{eq:main107}
 \begin{split}
\int_0^T\int_{\Omega}{\mathcal C}_{11}^\epsilon{\partial u_1^\epsilon  \over \partial t}\phi \psi  dx   dt
+ \int_0^T\int_{\Omega} \kappa_1^\epsilon\nabla u_1^\epsilon \cdot \nabla \phi \psi dx  dt
- {1 \over \ep }\int_0^T\int_{\Omega} Q^\epsilon (u_2^\ep -u_1^\ep) \phi\psi  dx  dt 
 = \int_0^T\int_{\Omega}q \phi \psi dx  dt.
 \end{split}
 \end{equation}
for all $\phi\in C^\infty_0 (\Omega)$ and $\psi\in C^\infty_0((0,T))$.
Passing to the limit, from (\ref{eq:main95-1}), (\ref{eq:main106}), Lemma \ref{lemma4}, we have
\begin{equation}
 \label{eq:main108}
 \begin{split}
\int_0^T\int_{\Omega}\int_Y& {\mathcal C}_{11} dy{\partial u_{10} \over \partial t} \phi \psi dx dt\\
&+ \int_0^T\int_{\Omega}\kappa_1^*\nabla u_{10}\cdot\nabla \phi\psi dx dt
+\int_0^T\int_{\Omega}\left(\int_Y \kappa_1 \nabla_y M_1dy\right)\cdot\nabla\phi(u_{20}-u_{10})\psi dx dt
\\ &- \int_0^T\int_{\Omega}\left(\left(\int_Y QN^i_2  dy\right)\frac{\partial u_{20}}{\partial x_i} -\left( \int_Y QN^i_1  dy\right)\frac{\partial u_{10}}{\partial x_i}\right) \phi\psi dx dt\\
&+ \int_0^T\int_{\Omega}\left(\int_YQ(M_1+M_2)  dy\right)(u_{20}-u_{10}) \phi\psi dx dt 
=\int_0^T\int_{\Omega} q \phi\psi dx dt.\\
\end{split}
\end{equation}
Similarly, we  derive
\begin{equation}
 \label{eq:main109}
 \begin{split}
\int_0^T\int_{\Omega}\int_Y& {\mathcal C}_{22} dy{\partial u_{20} \over \partial t} \phi dx \psi dt\\
&+\int_0^T\int_{\Omega}\kappa_2^*\nabla u_{20}\cdot\nabla \phi\psi dx dt
+\int_0^T\int_{\Omega}\left(\int_Y \kappa_2 \nabla_y M_2dy\right)\cdot\nabla\phi(u_{10}-u_{20})\psi dx dt
\\ &-\int_0^T\int_{\Omega}\left(\left(\int_Y QN^i_1  dy\right)\frac{\partial u_{10}}{\partial x_i} - \left(\int_Y QN^i_2  dy\right)\frac{\partial u_{20}}{\partial x_i}\right)\phi\psi dx dt\\
&+\int_0^T\int_{\Omega}\left(\int_YQ(M_1+M_2)  dy\right)(u_{10}-u_{20}) \phi\psi dx dt.
=\int_0^T\int_{\Omega} q \phi\psi dx dt,\\
\end{split}
\end{equation}
where 
\begin{equation}
 \label{eq:main110}
 \begin{split}
\kappa^*_{1ij}(x) =  \int_Y  \kappa_1 (x,y)(\delta_{ij} + {\partial  N^j_1(x,y)\over \partial y_i})  dy,\ \ 
\kappa^*_{2ij}(x) =  \int_Y  \kappa_2 (x,y)(\delta_{ij} + {\partial  N^j_2(x,y)\over \partial y_i})  dy.
\end{split}
\end{equation}

We now prove the initial condition of $u_{10},\ u_{20}$.
From (\ref{eq:main68'*'}) and (\ref{eq:main68'**}) and (\ref{eq:unibound}), we deduce that ${\mathcal C}_{11}^\epsilon{\partial \hat{u}_1^\epsilon  \over \partial t} $ is also bounded in $L^2(0,T;V')$, where $\hat{u}_1^\ep = u_1^\ep e^{-\lambda t}$ and thus, ${\mathcal C}_{11}^\epsilon{\partial u_1^\epsilon  \over \partial t} $ is bounded in $L^2(0,T;V')$. Similarly,
${\mathcal C}_{22}^\epsilon{\partial u_2^\epsilon  \over \partial t}$ is bounded in $L^2(0,T;V')$.
Let $\psi(t,x) \in {\mathcal C}_0^\infty (0,T;V)$, i.e. $\psi(0,x) = \psi(T,x) = 0$. As $\ep \rightarrow 0$, we have
\begin{equation}
\begin{split}
\int_0^T \int_\Omega {\mathcal C}_{11}^\epsilon{\partial u_1^\epsilon  \over \partial t}  \psi  dx  dt 
= - \int_0^T \int_\Omega {\mathcal C}_{11}^\epsilon  u_1^\epsilon \frac{\partial \psi}{\partial t}   dx  dt 
\rightarrow - \int_0^T \int_\Omega \langle {\mathcal C}_{11} \rangle u_{10} \frac{\partial \psi}{\partial t}  dx  dt = \int_0^T \int_\Omega \langle {\mathcal C}_{11} \rangle \frac{\partial u_{10}}{\partial t} \psi  dx  dt
\end{split}
\end{equation}
where $\langle\cdot\rangle$ denotes the integral average over $Y$. Note that we used Lemma \ref{lemma4}.
This shows that the weak limit of ${\mathcal C}_{11}^\epsilon{\partial u_1^\epsilon  \over \partial t} $ in $L^2(0,T;V')$ is $\langle {\mathcal C}_{11} \rangle  \frac{\partial u_{10}}{\partial t}$. Now we choose $\psi \in {\mathcal C}^\infty (0,T;V) $ so that $\psi(T,x)=0$. Then

\begin{equation}
\begin{split}
\int_0^T \int_\Omega {\mathcal C}_{11}^\epsilon{\partial u_1^\epsilon  \over \partial t}\psi  dx  dt &= - \int_0^T \int_\Omega {\mathcal C}_{11}^\epsilon  u_1^\epsilon \frac{\partial \psi}{\partial t}  dx  dt + \int_\Omega {\mathcal C}_{11}^\epsilon  u_1^\epsilon(0,x) \psi(0,x) dx \\
&\rightarrow -\int_0^T \int_\Omega \langle {\mathcal C}_{11} \rangle u_{10} \frac{\partial \psi}{\partial t}  dx  dt + \int_\Omega \langle {\mathcal C}_{11} \rangle g_1 \psi(0,x)  dx.
\end{split}
\end{equation}
On the other hand
\begin{equation}
\begin{split}
\int_0^T \int_\Omega {\mathcal C}_{11}^\epsilon{\partial u_1^\epsilon  \over \partial t} \psi  dx  dt 
\rightarrow  \int_0^T \int_\Omega \langle {\mathcal C}_{11} \rangle  \frac{\partial u_{10}}{\partial t} \psi  dx  dt\\
=- \int_0^T \int_\Omega \langle {\mathcal C}_{11} \rangle u_{10} \frac{\partial \psi}{\partial t}  dx  dt 
+ \int_\Omega \langle {\mathcal C}_{11} \rangle  u_{10}(0,x) \psi(0,x)  dx.
\end{split}
\end{equation}
This shows that $\langle {\mathcal C}_{11} \rangle u_{10}(0,x) = \langle {\mathcal C}_{11} \rangle g_1(x)$. i.e. the initial condition of $u_{10}$ is $u_{10}(0,x) = g_1(x)$.
 Similarly, we have initial condition $u_{20}(0,x) =  g_2(x)$.
\section{Homogenization error}
We prove Theorem \ref{hom_error} in this section. Let
\beq
\begin{split}
\label{uiiep}
u_{11}^\ep(t,x) = u_{10}(t,x) + \ep u_{11}(t,x,\xoe), \
u_{21}^\ep(t,x) = u_{20}(t,x) + \ep u_{21}(t,x,\xoe).
\end{split}
\eeq
Using
\eqref{eq:main7} 
we have
\beq
\label{eq:hom_error1-1}
\bsp
&\text{div}(\kappa_1^\epsilon(x)\nabla u_{11}^\epsilon(t,x)) + {1 \over \epsilon}Q^\epsilon(x)(u_{21}^\epsilon(t,x)-u_{11}^\epsilon(t,x))
\\&=
\div( \kappa_1^\ep \nabla u_{10}) + \ep\div(\kappa_1^\ep \nabla_x u_{11}) +\div (\kappa_1^\ep \nabla_y N^i_1(x,\xoe) \frac{\partial u_{10}}{\partial x_i}) 
 + \div( \kappa_1^\ep \nabla_y M_1(x,\xoe)(u_{20}-u_{10}))\\
&+\frac{1}{\ep} Q^\epsilon(u_{20}-u_{10}) 
 +Q^\epsilon (N^i_2(x,\xoe)\frac{\partial u_{20}}{\partial x_i}-N^i_1(x,\xoe) \frac{\partial u_{10}}{\partial x_i})
 +Q^\epsilon(M_2(x,\xoe)+M_1(x,\xoe))(u_{10}-u_{20})\\
& = \div( \kappa_1^\ep \nabla u_{10}) + \ep\div(\kappa_1^\ep \nabla_x u_{11}) 
 +\div (\kappa_1^\ep\nabla_y N^i_1(x,\xoe) \frac{\partial u_{10}}{\partial x_i}) 
 + \div(\kappa_1^\ep \nabla_y M_1(x,\xoe) (u_{20}-u_{10}))\\
&+\div (\mathcal{Q}(x,\xoe) (u_{20}-u_{10}) )- \div_x(\mathcal{Q} (x,\xoe) (u_{20}-u_{10}) )
 +Q(x,\xoe) (N^i_2(x,\xoe) \frac{\partial u_{20}}{\partial x_i}-N^i_1(x,\xoe)\frac{\partial u_{10}}{\partial x_i})\\
& +Q(x,\xoe)(M_2(x,\xoe)+M_1(x,\xoe))(u_{10}-u_{20})\\
& =\div( \kappa_1^\ep \nabla u_{10}) 
 + \ep\div(\kappa_1^\ep \nabla_x u_{11}) 
 +\div (\kappa_1^\ep \nabla_y N^i_1(x,\xoe) \frac{\partial u_{10}}{\partial x_i}) 
 + \div( \kappa_1^\ep \nabla_y M_1(x,\xoe) (u_{20}-u_{10}))\\
& +\div (\mathcal{Q} (x,\xoe)(u_{20}-u_{10}) )- \div_x(\mathcal{Q}(x,\xoe) (u_{20}-u_{10}) )
 -\div( \int_Y \mathcal{Q}(x,y) dy (u_{20}-u_{10})) \\
 &+ \div(\int_Y \mathcal{Q}(x,y) dy (u_{20}-u_{10}))+Q(x,\xoe) (N^i_2(x,\xoe) \frac{\partial u_{20}}{\partial x_i}-N^i_1(x,\xoe) \frac{\partial u_{10}}{\partial x_i})\\
& +Q(x,\xoe)(M_2(x,\xoe)+M_1(x,\xoe))(u_{10}-u_{20}).\\
\end{split}
\eeq
We let $F(t,x,y)$ be defined as
\beq
\bsp
F(t,x,y) = \kappa_1(x,y) \nabla u_{10}(t,x)+ \kappa_1(x,y) \nabla_y N^i_1(x,y) \frac{\partial u_{10}(t,x)}{\partial x_i}
 + \kappa_1(x,y) \nabla_y M_1(x,y) (u_{20}(t,x)-u_{10}(t,x))\\
 +\mathcal{Q} (x,y) (u_{20}(t,x)-u_{10}(t,x))  - \int_Y \mathcal{Q}(x,y) dy (u_{20}(t,x)-u_{10}(t,x))\\
 - \left(\int_Y \kappa_1(x,y)dy \nabla u_{10}(t,x)+ \int_Y \kappa_1 (x,y)\nabla_y N^i_1(x,y) dy \frac{\partial u_{10}(t,x)}{\partial x_i}\right.\\
 \left.+ \int_Y \kappa_1(x,y) \nabla_y M_1(x,y) dy (u_{20}(t,x)-u_{10}(t,x))\right).
\end{split}
\eeq
We let
\beq
\bsp
&G (t,x,y) = - \div_x(\mathcal{Q}(x,y) (u_{20}-u_{10}) ) + \div \big(\int_Y \mathcal{Q}(x,y) dy (u_{20}-u_{10})\big)
 +Q(x,y) \big(N^i_2(x,y) \frac{\partial u_{20}}{\partial x_i}-N^i_1(x,y) \frac{\partial u_{10}}{\partial x_i}\big)\\
 &+Q(x,y)(M_2(x,y)+M_1(x,u))(u_{10}-u_{20})
 - \bigg( \int_Y Q(x,y) N^i_2(x,y)dy \frac{\partial u_{20}}{\partial x_i}-\int_YQ(x,y)N^i_1(x,y)dy \frac{\partial u_{10}}{\partial x_i}\bigg) \\
& -\int_Y Q(x,y)(M_2(x,y)+M_1(x,y))(u_{10}-u_{20}) dy.
\end{split}
\eeq
Note that from (\ref{eq:cell}), we deduce $\div_y F(t,x,y) = 0$. Further,
we have $\int_Y F_i(t,x,y) dy = 0$, $i=1,\ldots,d$. From the hypothesis of the theorem, $F_i(t,x,y) \in C(0,T;C^1(\bar{\Omega};C(\bar{Y})))$.
Thus, from \cite{jikov2012homogenization},  there are functions $\alpha_{ij}(t,x,y) \in C(0,T;C^1(\bar{\Omega};C^1(\bar{Y})))$ such that
\beq
\alpha_{ij} = - \alpha_{ji}\ \textrm{and} \ F_i(t,x,y) = \frac{\partial}{\partial y_j}\alpha_{ij}(t,x,y),
\eeq
for $i,j=1,\ldots,d$.
From this, we have
\beq
F_i(t,x,\xoe) = \ep\frac{ d}{ d x_j}\alpha_{ij}(t,x,\xoe)-\ep\frac{\partial}{\partial x_j}\alpha_{ij}(t,x,\xoe),
\eeq
where $\frac{ d}{ d x_j}$ is the total partial derivative with respect to $x_j$ of a function of $t$ and $x$.
Then for any $\phi(x) \in V$, we have
\beq
\label{eq:main128}
\bsp
\int_\Omega F_i(t,x,\xoe) \frac{\partial}{\partial x_i} \phi(x) dx
=\int_\Omega \big(\ep\frac{ d}{ d x_j}\alpha_{ij}(t,x,\xoe)-\ep\frac{\partial}{\partial x_j}\alpha_{ij}(t,x,\xoe)\big) \frac{\partial}{\partial x_i} \phi(x) dx\\
=-\ep \int_\Omega \alpha_{ij}(t,x,\xoe) \frac{\partial^2 \phi(x)}{\partial x_j \partial x_i} dx
-\ep \int_\Omega \frac{\partial}{\partial x_j}\alpha_{ij}(t,x,\xoe) \frac{\partial}{\partial x_i} \phi(x) dx
= -\ep \int_\Omega \frac{\partial}{\partial x_j}\alpha_{ij}(t,x,\xoe) \frac{\partial}{\partial x_i} \phi(x) dx.
\end{split}
\eeq
As $\int_Y G(t,x,y) dy = 0$, there exists a vector function $\mathcal{G}\in$ $C(0,T;C^1(\bar{\Omega};C^1(\bar{Y})))$ which is $Y$-periodic with respect to $y$
such that 
$\div_y \mathcal{G} = G$. Thus for any $\phi(x) \in V$, we have
\beq
\label{eq:main129}
\bsp
\int_\Omega G(t,x,\xoe) \phi dx
= \int_\Omega \div_y \mathcal{G}(t,x,\xoe)\phi dx
=\ep \int_\Omega \div \mathcal{G}(t,x,\xoe)\phi dx - \ep \int_\Omega \div_x \mathcal{G}(t,x,\xoe)\phi dx\\
= -\ep \int_\Omega \mathcal{G}(t,x,\xoe) \cdot \nabla \phi dx - \ep \int_\Omega \div_x \mathcal{G}(t,x,\xoe)\phi dx.
\end{split}
\eeq
From \eqref{uiiep}, we have
\beq
\label{eq:main130*}
\norm{{\mathcal C}_{11}^\epsilon{\partial u_{11}^\epsilon  \over \partial t}(t) 
- \int_Y {\mathcal C}_{11} dy{\partial u_{10} \over \partial t}(t) }_{V'} \leq c\ep
\eeq
where $c$ is independent of $t$.
From (\ref{eq:hom_error1-1}), (\ref{eq:main128}), (\ref{eq:main129}) and (\ref{eq:main130*}), we have
\beq
\label{eq:main131}
\bsp
&\left\|\bigg({\mathcal C}_{11}^\epsilon{\partial u_{11}^\epsilon  \over \partial t}(t)
-\text{div}(\kappa_1^\epsilon(x)\nabla u_{11}^\epsilon(t)) - {1 \over \epsilon}Q^\epsilon(u_{21}^\epsilon(t)-u_{11}^\epsilon(t))\bigg)\right.\\
&\left.-\bigg(\int_Y {\mathcal C}_{11} dy{\partial u_{10} \over \partial t}(t)
- \div (\kappa_1^*\nabla u_{10}(t)) 
-\div \big(\int_Y \kappa_1 \nabla_y M_1  dy(u_{20}(t)-u_{10}(t))\big)\right.\\
&\left.- \big(\int_Y QN^i_2  dy\frac{\partial u_{20}}{\partial x_i}(t) - \int_Y QN^i_1  dy\frac{\partial u_{10}}{\partial x_i}(t)\big) + \int_YQ(M_1+M_2)  dy(u_{20}(t)-u_{10}(t))
\bigg)\right\|_{V'}
\leq c\ep.
\end{split}
\eeq
Let $\tau^\ep \in \mathcal{D} (\Omega)$ be such that 
\beqas
\bsp
\tau^\ep (x) = 0 \  \enspace \textrm{if} \ \  d(x,\partial \Omega) \leq \ep, \ \
\tau^\ep (x) = 1 \   \enspace \textrm{if} \ \  d(x,\partial \Omega) \geq 2\ep, \
\ep |\nabla \tau^\ep(x)| \leq C, 
\end{split}
\eeqas
 where $C$ is independent of $\ep$. We define the functions
 \beq
 \begin{split}
 \omega_{11}^\ep (t,x)= u_{10}(t,x) + \ep \tau^\ep(x) u_{11}(t,x,\xoe), \ \ 
 \omega_{21}^\ep (t,x)= u_{20}(t,x) + \ep \tau^\ep(x) u_{21}(t,x,\xoe).
 \end{split}
 \eeq
Using the smoothness asumptions of the theorem, we have
 \beq
 \label{eq:main131*}
 \bsp
 \nabla (u_{11}^\eps(t,x) - \omega_{11}^\eps(t,x)) = -\eps \nabla \tau^\eps(x) u_{11} (t,x,\xoe) + \eps(1-\tau^\eps(x)) \nabla_x u_{11}(t,x,\xoe) 
 + (1-\tau^\eps(x)) \nabla_y u_{11} (t,x,\xoe), \\
 \nabla (u_{21}^\eps(t,x) - \omega_{21}^\eps(t,x)) = -\eps \nabla \tau^\eps(x) u_{21} (t,x,\xoe) + \eps(1-\tau^\eps(x)) \nabla_x u_{21}(t,x,\xoe) 
 + (1-\tau^\eps(x)) \nabla_y u_{21} (t,x,\xoe).
 \end{split}
 \eeq
 It follows from (\ref{eq:main131*}) that
 \beq
 \label{eq:main132}
 \bsp
 \|u_{11}^\eps(t) - \omega_{11}^\eps(t)\|_{H^1(\Omega)} \leq c \eps^{\frac{1}{2}},\ \  \|u_{21}^\eps(t) - \omega_{21}^\eps(t)\|_{H^1(\Omega)} \leq c \eps^{\frac{1}{2}}
  \end{split}
 \eeq
where the constant $c$ is independent of $t$.
From (\ref{eq:main132}), we have
 \beq 
 \bsp
& \int_\Omega {\mathcal C}_{11}^\epsilon(x){\partial (u_{11}^\epsilon(t,x)- \omega_{11}^\epsilon(t,x))  \over \partial t} \phi(x)dx
 +\int_\Omega \kappa_1^\epsilon(x)\nabla (u_{11}^\epsilon(t,x) - \omega_{11}^\epsilon(t,x)) \cdot \nabla  \phi(x) dx \\
 &-\int_\Omega \mathcal{Q}(x,\xoe) ((u_{11}^\epsilon(t,x) - \omega_{11}^\epsilon(t,x))-(u_{21}^\epsilon(t,x) - \omega_{21}^\epsilon(t,x))) \nabla \phi(x) dx\\
&  -\int_\Omega \mathcal{Q}(x,\xoe) ((\nabla u_{11}^\epsilon(t,x) - \nabla\omega_{11}^\epsilon(t,x))-(\nabla u_{21}^\epsilon(t,x) - \nabla\omega_{21}^\epsilon(t,x)))  \phi(x) dx\\
 &-\int_\Omega \div_x \mathcal{Q}(x,\xoe) ((u_{11}^\epsilon(t,x) - \omega_{11}^\epsilon(t,x))-(u_{21}^\epsilon(t,x) - \omega_{21}^\epsilon(t,x))) \phi(x) dx\\
 &\leq c \left( \norm{{\partial (u_{11}^\epsilon(t)- \omega_{11}^\epsilon(t))  \over \partial t}}_{H^1(\Omega)}
 + \norm{(u_{11}^\epsilon(t) - \omega_{11}^\epsilon(t))}_{H^1(\Omega)}
 + \norm{(u_{21}^\epsilon(t) - \omega_{21}^\epsilon(t))}_{H^1(\Omega)}\right) \norm{\phi}_V
 \leq c \ep^{1 \over 2} \norm{\phi}_V
 \end{split}
 \eeq
 for all $\phi \in V$, where $c>0$ is independent of $t$. 
 Then we obtain
\beq
\label{eq:a}
\bsp
\Bigg\|\bigg({\mathcal C}_{11}^\epsilon(x){\partial u_{11}^\epsilon(t,x)  \over \partial t}
&-\text{div}(\kappa_1^\epsilon(x)\nabla u_{11}^\epsilon(t,x)) - {1 \over \epsilon}Q^\epsilon(x)(u_{21}^\epsilon(t,x)-u_{11}^\epsilon(t,x))\bigg)\\
-\bigg({\mathcal C}_{11}^\epsilon(x){\partial \omega_{11}^\epsilon(t,x)  \over \partial t}
&-\text{div}(\kappa_1^\epsilon(x)\nabla \omega_{11}^\epsilon(t,x)) - {1 \over \epsilon}Q^\epsilon(x)(\omega_{21}^\epsilon(t,x)-\omega_{11}^\epsilon(t,x))\bigg)\Bigg\|_{{V'}} \leq c \ep^{1 \over 2}.
\end{split}
\eeq
From \eqref{eq:main1}, \eqref{eq:main9}, \eqref{eq:main131} and \eqref{eq:a}, we obtain
\beq
\label{eq:ep12}
\bsp
\Bigg\|\bigg({\mathcal C}_{11}^\epsilon(x){\partial u_{1}^\epsilon(t,x)  \over \partial t}
&-\text{div}(\kappa_1^\epsilon(x)\nabla u_{1}^\epsilon(t,x)) - {1 \over \epsilon}Q^\epsilon(x)(u_{2}^\epsilon(t,x)-u_{1}^\epsilon(t,x))\bigg)\\
-\bigg({\mathcal C}_{11}^\epsilon(x){\partial \omega_{11}^\epsilon(t,x)  \over \partial t}
&-\text{div}(\kappa_1^\epsilon(x)\nabla \omega_{11}^\epsilon(t,x)) - {1 \over \epsilon}Q^\epsilon(x)(\omega_{21}^\epsilon(t,x)-\omega_{11}^\epsilon(t,x))\bigg)\Bigg\|_{{V'}} \leq c \ep^{1 \over 2}
\end{split}
\eeq
where $c$ is independent of $t$.
Similarly,
\beq
\label{eq:ep12-1}
\bsp
\Bigg\|\bigg({\mathcal C}_{22}^\epsilon(x){\partial u_{2}^\epsilon(t,x)  \over \partial t}
&-\text{div}(\kappa_2^\epsilon(x)\nabla u_{2}^\epsilon(t,x)) - {1 \over \epsilon}Q^\epsilon(x)(u_{1}^\epsilon(t,x)-u_{2}^\epsilon(t,x))\bigg)\\
-\bigg({\mathcal C}_{22}^\epsilon(x){\partial \omega_{21}^\epsilon(t,x)  \over \partial t}
&-\text{div}(\kappa_2^\epsilon(x)\nabla \omega_{21}^\epsilon(t,x)) - {1 \over \epsilon}Q^\epsilon(x)(\omega_{11}^\epsilon(t,x)-\omega_{21}^\epsilon(t,x))\bigg)\Bigg\|_{{V'}} \leq c \ep^{1 \over 2}.
\end{split}
\eeq
Let $\lambda>0$. Let $\hat{u}_i^\ep(t,x) = u_i^\ep (t,x)e^{-\lambda t},\ \hat{\omega}_{i1}^\ep(t,x) = \omega_{i1}^\ep (t,x)e^{-\lambda t}$ for $i=1,2$.
From \eqref{eq:ep12} and \eqref{eq:ep12-1}, we deduce 
\beq
\label{eq:lamb1}
\bsp
 \int_\Omega {\mathcal C}_{11}^\epsilon {\partial   \over \partial t} (u_{1}^\epsilon -\omega_{11}^\ep) (\hat{u}_{1}^\ep  -\hat{\omega}_{11}^\ep)dx 
+ \int_\Omega \kappa_1^\epsilon {\big(\nabla u_{1}^\ep - \nabla \omega_{11}^\epsilon\big)\cdot(\nabla \hat{u}_{1}^\ep -\nabla \hat{\omega}_{11}^\ep)} dx \\
+ \int_\Omega {\mathcal C}_{22}^\epsilon {\partial   \over \partial t} (u_{2}^\epsilon -\omega_{21}^\ep) (\hat{u}_{2}^\ep -\hat{\omega}_{21}^\ep ) dx 
+ \int_\Omega \kappa_2^\epsilon\big(\nabla u_{2}^\ep - \nabla \omega_{21}^\epsilon\big)\cdot(\nabla \hat{u}_{2}^\ep  -\nabla \hat{\omega}_{21}^\ep )dx \\
- \int_\Omega \mathcal{Q}(x,\xoe)\cdot\big((\nabla u_1^\ep -\nabla\omega_{11}^\ep)-(\nabla u_2^\ep-\nabla \omega_{21}^\ep)\big) 
\big((\hat{u}_{1}^\ep -\hat{\omega}_{11}^\ep)-(\hat{u}_{2}^\ep  -\hat{\omega}_{21}^\ep )\big) dx \\
- \int_\Omega \mathcal{Q}(x,\xoe)\cdot\big((\nabla\hat{u}_{1}^\ep  -\nabla\hat{\omega}_{11}^\ep )-(\nabla\hat{u}_{2}^\ep  -\nabla\hat{\omega}_{21}^\ep )\big) 
\big((u_1^\ep -\omega_{11}^\ep )-( u_2^\ep - \omega_{21}^\ep )\big)dx \\
-\int_\Omega\div_x \mathcal{Q}(x,\xoe) \big((u_1^\ep-\omega_{11}^\ep)-( u_2^\ep- \omega_{21}^\ep)\big)
\big((\hat{u}_{1}^\ep  -\hat{\omega}_{11}^\ep )-(\hat{u}_{2}^\ep  -\hat{\omega}_{21}^\ep )\big)dx\\
\leq c \ep^{1\over 2} \big(||\hat{u}_1^\ep(t)-\hat{\omega}_{11}^\ep(t)||_{V} + ||\hat{u}_2^\ep(t) -\hat{\omega}_{21}^\ep(t) ||_{V}\big).\\
\end{split}
\eeq
As $u_i^\ep (t,x) = \hat{u}_i^\ep(t,x) e^{\lambda t},\ \omega_{i1}^\ep (t,x)=\hat{\omega}_{i1}^\ep(t,x)e^{\lambda t} $
\beq
\label{eq:lamb2}
\bsp
 \int_\Omega {\mathcal C}_{11}^\epsilon {\partial   \over \partial t}  (\hat{u}_{1}^\ep  -\hat{\omega}_{11}^\ep ) (\hat{u}_{1}^\ep  -\hat{\omega}_{11}^\ep )  dx 
+\lambda \int_\Omega {\mathcal C}_{11}^\epsilon (\hat{u}_{1}^\ep  -\hat{\omega}_{11}^\ep )^2 dx 
+ \int_\Omega \kappa_1^\epsilon |\nabla \hat{u}_{1}^\ep -\nabla \hat{\omega}_{11}^\ep |^2dx \\
+ \int_\Omega {\mathcal C}_{22}^\epsilon {\partial   \over \partial t}  (\hat{u}_{2}^\ep  -\hat{\omega}_{21}^\ep ) (\hat{u}_{2}^\ep  -\hat{\omega}_{21}^\ep )  dx 
+\lambda \int_\Omega {\mathcal C}_{22}^\epsilon (\hat{u}_{2}^\ep  -\hat{\omega}_{21}^\ep )^2 dx 
+ \int_\Omega \kappa_2^\epsilon |\nabla \hat{u}_{2}^\ep -\nabla \hat{\omega}_{21}^\ep |^2dx \\
- 2\int_\Omega \mathcal{Q}(x,\xoe)\cdot\big((\nabla \hat{u}_1^\ep -\nabla\hat{\omega}_{11}^\ep)-(\nabla \hat{u}_2^\ep-\nabla \hat{\omega}_{21}^\ep)\big) 
\big((\hat{u}_{1}^\ep  -\hat{\omega}_{11}^\ep )-(\hat{u}_{2}^\ep  -\hat{\omega}_{21}^\ep ))\big) dx \\
-\int_\Omega \div_x \mathcal{Q}(x,\xoe) \big((\hat{u}_1^\ep-\hat{\omega}_{11}^\ep)-( \hat{u}_2^\ep- \hat{\omega}_{21}^\ep)\big)^2 dx\\
\leq c e^{-\lambda t}\ep^{1\over 2} \big(||\hat{u}_1^\ep(t) -\hat{\omega}_{11}^\ep(t) ||_{V} + ||\hat{u}_2^\ep(t) -\hat{\omega}_{21}^\ep(t) ||_{V}\big).
\end{split}
\eeq

Integrating over $[0,T]$ we get
\beq
\label{eq:lamb3}
\bsp
\frac{1}{2} \int_\Omega {\mathcal C}_{11}^\epsilon  (\hat{u}_{1}^\ep(T)  -\hat{\omega}_{11}^\ep(T) )^2 dx dt
+\lambda \int_0^T \int_\Omega {\mathcal C}_{11}^\epsilon (\hat{u}_{1}^\ep  -\hat{\omega}_{11}^\ep )^2 dx dt
+  \int_0^T\int_\Omega \kappa_1^\epsilon |\nabla \hat{u}_{1}^\ep -\nabla \hat{\omega}_{11}^\ep |^2dxdt \\
+\frac{1}{2} \int_\Omega {\mathcal C}_{22}^\epsilon  (\hat{u}_{2}^\ep(T)  -\hat{\omega}_{21}^\ep(T) )^2 dx dt
+\lambda  \int_0^T\int_\Omega {\mathcal C}_{22}^\epsilon (\hat{u}_{2}^\ep  -\hat{\omega}_{21}^\ep )^2 dx dt
+  \int_0^T\int_\Omega \kappa_2^\epsilon |\nabla \hat{u}_{2}^\ep -\nabla \hat{\omega}_{21}^\ep |^2dxdt \\
-2 \int_0^T\int_\Omega \mathcal{Q}(x,\xoe)\cdot\big((\nabla \hat{u}_1^\ep -\nabla\hat{\omega}_{11}^\ep)-(\nabla \hat{u}_2^\ep-\nabla \hat{\omega}_{21}^\ep)\big) 
\big((\hat{u}_{1}^\ep  -\hat{\omega}_{11}^\ep )-(\hat{u}_{2}^\ep  -\hat{\omega}_{21}^\ep ))\big) dxdt \\
- \int_0^T\int_\Omega \div_x \mathcal{Q}(x,\xoe) \big((\hat{u}_1^\ep-\hat{\omega}_{11}^\ep)-( \hat{u}_2^\ep- \hat{\omega}_{21}^\ep)\big)^2 dx dt\\
\leq c \ep^{1\over 2} \left( \int_0^T||\hat{u}_1^\ep(t) -\hat{\omega}_{11}^\ep (t)||_{V} + ||\hat{u}_2^\ep(t) -\hat{\omega}_{21}^\ep (t)||_{V}dt\right) 
+\frac{1}{2} \int_0^T \int_\Omega {\mathcal C}_{11}^\epsilon  (\hat{u}_{1}^\ep(0)  -\hat{\omega}_{11}^\ep(0) )^2 dxdt\\
+\frac{1}{2}  \int_0^T\int_\Omega {\mathcal C}_{22}^\epsilon  (\hat{u}_{2}^\ep(0)  -\hat{\omega}_{21}^\ep(0) )^2 dxdt
\end{split}
\eeq
By Cauchy Schwartz and Young's inequalities, using the boundedness of $\mathcal{Q}(x,y)$, one obtains 
\beq
\label{eq:lamb4}
\bsp
\frac{1}{2} \int_\Omega {\mathcal C}_{11}^\epsilon  (\hat{u}_{1}^\ep(T)  -\hat{\omega}_{11}^\ep(T) )^2 dx dt
+\lambda \int_0^T \int_\Omega {\mathcal C}_{11}^\epsilon (\hat{u}_{1}^\ep  -\hat{\omega}_{11}^\ep )^2 dx dt
+  \int_0^T\int_\Omega \kappa_1^\epsilon |\nabla \hat{u}_{1}^\ep -\nabla \hat{\omega}_{11}^\ep |^2dxdt \\
\frac{1}{2} \int_\Omega {\mathcal C}_{22}^\epsilon  (\hat{u}_{2}^\ep(T)  -\hat{\omega}_{21}^\ep(T) )^2 dx dt
+\lambda  \int_0^T\int_\Omega {\mathcal C}_{22}^\epsilon (\hat{u}_{2}^\ep  -\hat{\omega}_{21}^\ep )^2 dx dt
+  \int_0^T\int_\Omega \kappa_2^\epsilon |\nabla \hat{u}_{2}^\ep -\nabla \hat{\omega}_{21}^\ep |^2dxdt \\
{\displaystyle-c_0(||\nabla \hat{u}_1^\ep(t) -\nabla\hat{\omega}_{11}^\ep(t) ||^2_{{L^2(0,T;H)}}+||\nabla \hat{u}_2^\ep(t)  -\nabla\hat{\omega}_{21}^\ep(t) ||^2_{{L^2(0,T;H)}}) }\\
\displaystyle{- c_1(||\hat{u}_{1}^\ep(t)   -\hat{\omega}_{11}^\ep(t) ||^2_{{L^2(0,T;H)}}+||\hat{u}_{2}^\ep (t)  -\hat{\omega}_{21}^\ep(t) ||^2_{{L^2(0,T;H)}})}\\
\displaystyle{-c_2(||\hat{u}_{1}^\ep (t)  -\hat{\omega}_{11}^\ep(t) ||^2_{{L^2(0,T;H)}}+||\hat{u}_{2}^\ep (t)  -\hat{\omega}_{21}^\ep(t) ||^2_{{L^2(0,T;H)}})}\\
\leq c \ep^{1\over 2} \bigg(\big( \int_0^T||\hat{u}_1^\ep -\hat{\omega}_{11}^\ep ||^2_{V} dt\big)^\frac{1}{2}+ \big(\int_0^T||\hat{u}_2^\ep(t)  -\hat{\omega}_{21}^\ep(t)  ||^2_{V}dt\big)^\frac{1}{2} \bigg)
+\frac{1}{2} \int_0^T \int_\Omega {\mathcal C}_{11}^\epsilon  (\hat{u}_{1}^\ep(0)  -\hat{\omega}_{11}^\ep(0) )^2 dxdt\\
+\frac{1}{2}  \int_0^T\int_\Omega {\mathcal C}_{22}^\epsilon  (\hat{u}_{2}^\ep(0)  -\hat{\omega}_{21}^\ep(0) )^2 dxdt
\end{split}
\eeq
where the constant $c_0>0$ can be chosen to be smaller than $\underline{\kappa}$ in \eqref{eq:coercivity}. Choosing $\lambda$ large enough, we obtain from (\ref{eq:coercivity}),
\beq
\bsp
c||\hat{u}_1^\ep(T) - \hat{\omega}_{11}^\ep(T) ||_H^2 + c||\hat{u}_2^\ep(T) - \hat{\omega}_{21}^\ep(T) ||_H^2 
+c||\nabla \hat{u}_1^\ep - \nabla\hat{\omega}_{11}^\ep||_{L^2(0,T;H)}^2 +c||\nabla \hat{u}_2^\ep - \nabla \hat{\omega}_{21}^\ep||_{L^2(0,T;H)}^2\\
\leq c \ep^{1\over 2} ||\nabla \hat{u}_1^\ep - \nabla \hat{\omega}_{11}^\ep||_{L^2(0,T;H)} 
+c \ep^{1\over 2} ||\nabla \hat{u}_2^\ep - \nabla \hat{\omega}_{21}^\ep||_{L^2(0,T;H)}
+c||\hat{u}_1^\ep(0) - \hat{\omega}_{11}^\ep(0) ||_H^2 + c||\hat{u}_2^\ep(0) -\hat{\omega}_{21}^\ep(0) ||_H^2.
\end{split}
\eeq
Thus
\beq
\bsp
&||u_1^\ep(T) - \omega_{11}^\ep(T) ||_H^2 + ||u_2^\ep(T) - \omega_{21}^\ep(T) ||_H^2 
+||\nabla u_1^\ep - \nabla \omega_{11}^\ep||_{L^2(0,T;H)}^2 
+||\nabla u_2^\ep - \nabla \omega_{21}^\ep||_{L^2(0,T;H)}^2\\
&\leq c \ep^{1\over 2} ||\nabla u_1^\ep - \nabla \omega_{11}^\ep||_{L^2(0,T;H)} 
+c \ep^{1\over 2} ||\nabla u_2^\ep - \nabla \omega_{21}^\ep||_{L^2(0,T;H)}
+c||u_1^\ep(0) - \omega_{11}^\ep(0) ||_H^2 + c||u_2^\ep(0) - \omega_{21}^\ep(0) ||_H^2.
\end{split}
\eeq
Since $u_i^\ep(0) = u_{i0}(0) = g_i(x)$, we deduce that
\beq
u_i^\ep(0) - \omega_{i1}^\ep(0) = u_i^\ep(0) - u_{i0}(0) - \ep \tau u_{i1}(0,x,\xoe) = -\ep \tau u_{i1}(0,x,\xoe).
\eeq
As $u_{i1}(t,x,y) \in C([0,T] \times \bar{\Omega} \times \bar{Y})$, we have {$||u_i^\ep(0) - \omega_{i1}^\ep(0) ||_H \leq c \ep$}. 
From this we obtain
{
\beq
\bsp
||\nabla u_1^\ep - \nabla \omega_{11}^\ep||_{L^2(0,T;H)} +||\nabla u_2^\ep - \nabla \omega_{21}^\ep||_{L^2(0,T;H)}
\leq c \ep^{1\over 2}.
\end{split}
\eeq
From \eqref{eq:main132}, we have
\beq
\bsp
||\nabla u_1^\ep - \nabla u_{11}^\ep||_{L^2(0,T;H)} +||\nabla u_2^\ep - \nabla u_{21}^\ep||_{L^2(0,T;H)}
\leq c \ep^{1\over 2}.
\end{split}
\eeq
}
The conclusion follows.\hfill$\Box$

\section{Conclusions}

In this paper, we analyzed the homogenization of a two-scale dual-continuum system. 
The coupled exchange terms are scaled as $\mathcal{O}(\frac{1}{\ep})$. This scale gives an interesting homogenization limit which contains 
convection, coupled reaction terms with negative interaction coefficients while the original two scale system does not contain these features.
 We proved rigorously the homogenization convergence. We proved rigorously also the homogenization convergence rate. These proofs of homogenization convergence and error are significantly more complicated than those for the scaling $O(\frac{1}{\ep^2})$ considered in \cite{park2019hierarchical} due to 
 the complicated form of the homogenized equation. 
\clearpage
\appendix
\section{Existence and uniqueness of weak solutions}
In this appendix, we present the proof of the existence and uniqueness of a weak solution of (\ref{eq:main1}) and (\ref{eq:main9}).
In variational form, problem (\ref{eq:main1}) becomes :
Find $u_1^\ep, u_2^\ep \in L^2(0,T; V)$ such that $\frac{\partial u_1^\ep}{\partial t}, \frac{\partial u_2^\ep}{\partial t} \in L^2(0,T; V')$ and
\beq
\label{eq:var}
\bsp
\int_0^T \int_\Omega {\mathcal C}_{11}^\epsilon(x){\partial u_1^\epsilon(t,x) \over \partial t} \phi_1(t,x) & dx dt 
+ \int_0^T \int_\Omega \kappa_1^\epsilon(x)\nabla u_1^\epsilon(t,x) \cdot \nabla \phi_1(t,x) dx dt\\
&+  {1 \over \epsilon} \int_0^T \int_\Omega Q^\epsilon(x)(u_1^\epsilon(t,x)-u_2^\epsilon(t,x)) \phi_1(t,x)dx dt
=  \int_0^T \int_\Omega q \phi_1(t,x) dx dt\\
\int_0^T \int_\Omega {\mathcal C}_{22}^\epsilon(x){\partial u_2^\epsilon(t,x) \over \partial t} \phi_2(t,x) & dx dt 
+ \int_0^T \int_\Omega \kappa_2^\epsilon(x)\nabla u_2^\epsilon(t,x) \cdot \nabla \phi_2(t,x) dx dt\\
&+  {1 \over \epsilon} \int_0^T \int_\Omega Q^\epsilon(x)(u_2^\epsilon(t,x)-u_1^\epsilon(t,x)) \phi_2(t,x)dx dt
=  \int_0^T \int_\Omega q \phi_2(t,x) dx dt
\end{split}
\eeq
for all $\phi_1, \phi_2 \in L^2(0,T; V)$. The initial conditions are
$u_1^\ep(0,x) = g_1(x) \in H$, and $u_2^\ep(0,x) = g_2(x) \in H$. 
Let $W$ be the space $V\times V$.
We define a bilinear form $a : W \times W \to \mathbb{R}$ as
\beq
\bsp
&a((u_1(t), u_2(t)),(v_1(t),v_2(t)))\\
&=  \int_\Omega \kappa_1^\epsilon\nabla u_1(t)\cdot \nabla v_1(t) dx
+\int_\Omega \kappa_2^\epsilon\nabla u_2(t)\cdot \nabla v_2(t) dx
+  {1 \over \epsilon}  \int_\Omega Q^\epsilon(u_1(t)-u_2(t)) v_1(t)dx \\
&+  {1 \over \epsilon} \int_\Omega Q^\epsilon(u_2(t)-u_1(t)) v_2(t)dx. 
\end{split}
\eeq
We have the following theorem.
\begin{theorem}
\label{unibound}
Assume that the vector function $\mathcal{Q}(x,y)$ is in $ C^1(\bar{\Omega};C^1(\bar{Y}))^2$. Then the sequences $u_1^\epsilon$ and $u_2^\epsilon$ satisfying (\ref{eq:main1}) are uniformly bounded in $L^\infty(0,T;H)$ and $L^2(0,T;V)$.
\end{theorem}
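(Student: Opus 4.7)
The plan is to carry out a standard parabolic energy estimate for the coupled system, with the only non-standard ingredient being the treatment of the singular coupling $\tfrac{1}{\ep}Q^\ep$. I would test the first equation of \eqref{eq:main1} with $u_1^\ep$, the second with $u_2^\ep$, sum the resulting identities, and integrate over $(0,t)$ for $t\in(0,T]$. Because the off-diagonal contributions combine as $-\tfrac{1}{\ep}Q^\ep[(u_2^\ep-u_1^\ep)u_1^\ep+(u_1^\ep-u_2^\ep)u_2^\ep]=\tfrac{1}{\ep}Q^\ep(u_1^\ep-u_2^\ep)^2$, the coercivity hypothesis \eqref{eq:coercivity} and the chain rule in $t$ give the identity
\beq
\bsp
\frac{1}{2}\int_\Omega\bigl({\mathcal C}_{11}^\ep(u_1^\ep(t))^2+{\mathcal C}_{22}^\ep(u_2^\ep(t))^2\bigr) dx
&+\int_0^t\int_\Omega\bigl(\kappa_1^\ep|\nabla u_1^\ep|^2+\kappa_2^\ep|\nabla u_2^\ep|^2\bigr) dx ds\\
&+\frac{1}{\ep}\int_0^t\int_\Omega Q^\ep(u_1^\ep-u_2^\ep)^2 dx ds = \int_0^t\int_\Omega q(u_1^\ep+u_2^\ep) dx ds + I_0,
\esp
\eeq
where $I_0=\tfrac{1}{2}\int_\Omega({\mathcal C}_{11}^\ep g_1^2+{\mathcal C}_{22}^\ep g_2^2) dx$. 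The only term preventing an immediate $\ep$-uniform bound is the third one on the left, which carries an explicit $1/\ep$ and has indefinite sign.

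The key step is to exploit the assumption \eqref{eq:Qaverage}: since $\int_Y Q(x,y) dy=0$, there is a $Y$-periodic potential $\mathcal{Q}\in C^1(\bar\Omega;C^1(\bar Y))^2$ with $\div_y\mathcal{Q}=Q$, exactly the object already invoked in the excerpt before Theorem \ref{hom_error}. A direct chain-rule computation yields
\beq
\frac{1}{\ep}Q\bigl(x,\tfrac{x}{\ep}\bigr) = \div\bigl(\mathcal{Q}(x,\tfrac{x}{\ep})\bigr) - (\div_x\mathcal{Q})\bigl(x,\tfrac{x}{\ep}\bigr),
\eeq
and substituting this into the problematic term and integrating the divergence piece by parts in $x$---legitimate because $u_1^\ep-u_2^\ep\in V$ and hence $(u_1^\ep-u_2^\ep)^2$ vanishes on $\partial\Omega$---produces
\beq
\frac{1}{\ep}\int_\Omega Q^\ep(u_1^\ep-u_2^\ep)^2 dx = -2\int_\Omega \mathcal{Q}^\ep\cdot(u_1^\ep-u_2^\ep)\,\nabla(u_1^\ep-u_2^\ep) dx -\int_\Omega(\div_x\mathcal{Q})^\ep(u_1^\ep-u_2^\ep)^2 dx.
\eeq
By the $C^1$ regularity of $\mathcal{Q}$, both $\mathcal{Q}^\ep$ and $(\div_x\mathcal{Q})^\ep$ are bounded in $L^\infty(\Omega)$ by a constant independent of $\ep$.

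With this rewriting the rest of the argument is classical. Cauchy--Schwarz followed by Young's inequality with a small parameter $\delta$ gives
\beqas
\Bigl|2\int_\Omega \mathcal{Q}^\ep\cdot(u_1^\ep-u_2^\ep)\,\nabla(u_1^\ep-u_2^\ep) dx\Bigr| \leq \delta\int_\Omega|\nabla(u_1^\ep-u_2^\ep)|^2 dx + C_\delta\int_\Omega(u_1^\ep-u_2^\ep)^2 dx,
\eeqas
and choosing $\delta$ small relative to $\underline\kappa$ in \eqref{eq:coercivity} allows the gradient term to be absorbed into the coercive diffusion $\int_\Omega(\kappa_1^\ep|\nabla u_1^\ep|^2+\kappa_2^\ep|\nabla u_2^\ep|^2) dx$; the source $\int q(u_1^\ep+u_2^\ep)$ is treated similarly. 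These absorptions lead to
\beqas
y(t)+c_1\int_0^t\bigl(\|\nabla u_1^\ep\|_H^2+\|\nabla u_2^\ep\|_H^2\bigr) ds \leq c_2\int_0^t y(s) ds + c_3\bigl(\|g_1\|_H^2+\|g_2\|_H^2+\|q\|_{L^2(0,T;H)}^2\bigr),
\eeqas
with $y(t)=\|u_1^\ep(t)\|_H^2+\|u_2^\ep(t)\|_H^2$ and constants $c_i$ independent of $\ep$. Grönwall's inequality then yields the $L^\infty(0,T;H)$ bound, after which the reserved diffusion piece furnishes the $L^2(0,T;V)$ bound. The sole genuine obstacle is the singular coupling term; it is overcome entirely through the $\div_y$-primitive representation of $Q$ combined with integration by parts using the Dirichlet boundary condition, after which everything reduces to standard parabolic energy methods.
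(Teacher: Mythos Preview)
Your proof is correct and follows essentially the same approach as the paper: the key step---rewriting $\tfrac{1}{\ep}Q^\ep$ via the potential $\mathcal{Q}$ and integrating by parts to remove the $1/\ep$ singularity---is identical, and the paper's use of an exponential weight $e^{-\lambda t}$ with $\lambda$ chosen large is just the standard equivalent of your Gr\"onwall argument.
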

\begin{proof}
As $\div \mathcal{Q}(x,\xoe) = \div_x \mathcal{Q}(x,\xoe) + \frac{1}{\ep} \div_y \mathcal{Q}(x,\xoe)$, 
$ Q(x,\xoe) = \ep \div \mathcal{Q}(x,\xoe) - \ep \div_x \mathcal{Q}(x,\xoe)$.
 Note that
  \begin{equation}
\label{eq:main66}
 \begin{split}
 \int_{\Omega} Q^\epsilon(x)(u_2^\epsilon-u_1^\epsilon)\phi_1 dx = -\ep \int_\Omega \mathcal{Q}(x,\xoe)\cdot \nabla(u_2^\epsilon-u_1^\epsilon) \phi_1  dx - \ep \int_\Omega \mathcal{Q}(x,\xoe) \cdot \nabla \phi_1 (u_2^\epsilon-u_1^\epsilon)   dx
\\ -\ep \int_\Omega\div_x \mathcal{Q}(x,\xoe)  (u_2^\epsilon-u_1^\epsilon)\phi_1   dx,
 \end{split}
 \end{equation}
 for all $\phi_1$, $\phi_2$ $\in {\mathcal C}_0^\infty(\Omega)$. Thus, from \eqref{eq:main65}, we have
 \begin{equation}
\label{eq:main67}
 \begin{split}
\int_0^T\int_{\Omega}{\mathcal C}_{11}^\epsilon{\partial u_1^\epsilon  \over \partial t}\phi_1  dx dt
+ \int_0^T\int_{\Omega} \kappa_1^\epsilon\nabla u_1^\epsilon \cdot \nabla \phi_1  dx dt
=-\int_0^T\int_\Omega \mathcal{Q}(x,\xoe)\cdot \nabla(u_2^\epsilon-u_1^\epsilon) \phi_1  dx dt 
-\\ \int_0^T\int_\Omega \mathcal{Q}(x,\xoe) \cdot \nabla \phi_1 (u_2^\epsilon-u_1^\epsilon)   dx dt
- \int_0^T\int_\Omega \div_x \mathcal{Q}(x,\xoe)  (u_2^\epsilon-u_1^\epsilon)\phi_1   dx dt 
+\int_0^T \int_{\Omega}q \phi_1  dx dt.\\
 \end{split}
 \end{equation}
 We let $\hat{u}_1^\ep = u_1^\ep e^{-\lambda t}$, $\hat{u}_2^\ep = u_2^\ep e^{-\lambda t}$ and $\check{\phi}_1 = \phi_1 e^{\lambda t}$. Then,
  \begin{equation}
\label{eq:main68'*'}
 \begin{split}
\int_0^T\int_{\Omega}{\mathcal C}_{11}^\epsilon{\partial \hat{u}_1^\epsilon  \over \partial t}\check{\phi}_1  dx dt
+\lambda  \int_0^T\int_{\Omega}{\mathcal C}_{11}^\epsilon \hat{u}_1^\epsilon\check{\phi}_1  dx dt
+ \int_0^T\int_{\Omega} \kappa_1^\epsilon\nabla \hat{u}_1^\epsilon \cdot \nabla \check{\phi}_1  dx dt\\
=-\int_0^T\int_\Omega \mathcal{Q}(x,\xoe)\cdot \nabla(\hat{u}_2^\epsilon-\hat{u}_1^\epsilon) \check{\phi}_1  dx dt 
- \int_0^T\int_\Omega \mathcal{Q}(x,\xoe) \cdot \nabla \check{\phi}_1 (\hat{u}_2^\epsilon-\hat{u}_1^\epsilon)   dx dt\\
- \int_0^T\int_\Omega \div_x\mathcal{Q}(x,\xoe)  (\hat{u}_2^\epsilon-\hat{u}_1^\epsilon)\check{\phi}_1   dx dt 
+\int_0^T \int_{\Omega}q \check{\phi}_1 e^{-\lambda t} dx dt.\\
 \end{split}
 \end{equation}
 Similarly, we have
   \begin{equation}
\label{eq:main68'**}
 \begin{split}
\int_0^T\int_{\Omega}{\mathcal C}_{22}^\epsilon{\partial \hat{u}_2^\epsilon  \over \partial t}\check{\phi}_2  dx dt
+\lambda  \int_0^T\int_{\Omega}{\mathcal C}_{22}^\epsilon \hat{u}_2^\epsilon\check{\phi}_2  dx dt
+ \int_0^T\int_{\Omega} \kappa_2^\epsilon\nabla \hat{u}_2^\epsilon \cdot \nabla \check{\phi}_2  dx dt\\
=-\int_0^T\int_\Omega \mathcal{Q}(x,\xoe)\cdot \nabla(\hat{u}_1^\epsilon-\hat{u}_2^\epsilon) \check{\phi}_2  dx dt 
- \int_0^T\int_\Omega \mathcal{Q}(x,\xoe) \cdot \nabla \check{\phi}_2 (\hat{u}_1^\epsilon-\hat{u}_2^\epsilon)   dx dt\\
- \int_0^T\int_\Omega \div_x \mathcal{Q}(x,\xoe)  (\hat{u}_1^\epsilon-\hat{u}_2^\epsilon)\check{\phi}_2  dx dt 
+\int_0^T \int_{\Omega}q \check{\phi}_2 e^{-\lambda t} dx dt.\\
 \end{split}
 \end{equation}
Let $\check{\phi}_1 =  \hat{u}_1^\ep$, $\check{\phi}_2 =  \hat{u}_2^\ep$. Taking the sum of the above two equations we get
  \begin{equation}
\label{eq:main67}
 \begin{split}
\int_0^T\int_{\Omega}{\mathcal C}_{11}^\epsilon{\partial \hat{u}_1^\epsilon  \over \partial t}\hat{u}_1^\ep  dx dt
+\int_0^T\int_{\Omega}{\mathcal C}_{22}^\epsilon{\partial \hat{u}_2^\epsilon  \over \partial t}\hat{u}_2^\ep  dx dt
+ \int_0^T\int_{\Omega} \kappa_1^\epsilon\nabla \hat{u}_1^\epsilon \cdot \nabla \hat{u}_1^\ep  dx dt
+ \int_0^T\int_{\Omega} \kappa_2^\epsilon\nabla \hat{u}_2^\epsilon \cdot \nabla \hat{u}_2^\ep dx dt\\
+\lambda  \int_0^T\int_{\Omega}{\mathcal C}_{11}^\epsilon (\hat{u}_1^\epsilon)^2  dx dt
+\lambda  \int_0^T\int_{\Omega}{\mathcal C}_{22}^\epsilon (\hat{u}_2^\epsilon)^2  dx dt\\
=2\int_0^T\int_\Omega \mathcal{Q}(x,\xoe)\cdot \nabla(\hat{u}_1^\epsilon-\hat{u}_2^\epsilon)(\hat{u}_1^\epsilon-\hat{u}_2^\epsilon)   dx dt 
+ \int_0^T\int_\Omega \div_x\mathcal{Q}(x,\xoe)  (\hat{u}_1^\epsilon-\hat{u}_2^\epsilon)^2  dx dt \\
+\int_0^T \int_{\Omega}q e^{-\lambda t} (\hat{u}_1^\epsilon+\hat{u}_2^\epsilon) dx dt.
 \end{split}
 \end{equation}
 Thus,
   \begin{equation}
\label{eq:main68}
 \begin{split}
\frac{1}{2}\int_{\Omega}{\mathcal C}_{11}^\epsilon |\hat{u}_1^\ep(T,x)|^2  dx
+\frac{1}{2}\int_{\Omega}{\mathcal C}_{22}^\epsilon|\hat{u}_2^\ep(T,x)|^2  dx
+ \int_0^T\int_{\Omega} \kappa_1^\epsilon\nabla \hat{u}_1^\epsilon \cdot \nabla \hat{u}_1^\ep  dx dt
+ \int_0^T\int_{\Omega} \kappa_2^\epsilon\nabla \hat{u}_2^\epsilon \cdot \nabla \hat{u}_2^\ep dx dt\\
+\lambda  \int_0^T\int_{\Omega}{\mathcal C}_{11}^\epsilon (\hat{u}_1^\epsilon)^2  dx dt
+\lambda  \int_0^T\int_{\Omega}{\mathcal C}_{22}^\epsilon (\hat{u}_2^\epsilon)^2  dx dt\\
=2\int_0^T\int_\Omega \mathcal{Q}(x,\xoe)\cdot \nabla(\hat{u}_1^\epsilon-\hat{u}_2^\epsilon)(\hat{u}_1^\epsilon-\hat{u}_2^\epsilon)   dx dt 
+ \int_0^T\int_\Omega \div_x\mathcal{Q}(x,\xoe)  (\hat{u}_1^\epsilon-\hat{u}_2^\epsilon)^2  dx dt \\
+\int_0^T \int_{\Omega}q e^{-\lambda t} (\hat{u}_1^\epsilon+\hat{u}_2^\epsilon) dx dt
+\frac{1}{2}\int_{\Omega}{\mathcal C}_{11}^\epsilon |\hat{u}_1^\ep(0,x)|^2  dx
+\frac{1}{2} \int_{\Omega}{\mathcal C}_{22}^\epsilon|\hat{u}_2^\ep(0,x)|^2  dx.
 \end{split}
 \end{equation}
 Since $\mathcal{Q}(x,y) \in C^1(\bar{\Omega};C^1(\bar{Y}))$, using Cauchy Schwartz and Young's inequalities, we have
 \begin{equation}
\label{eq:main69}
 \begin{split}
2\int_0^T\int_\Omega \mathcal{Q}(x,\xoe)\cdot \nabla&(\hat{u}_1^\epsilon-\hat{u}_2^\epsilon)(\hat{u}_1^\epsilon-\hat{u}_2^\epsilon)  dx dt \\
&\leq c_0 (||\hat{u}_1^\epsilon||_{L^2(0,T;V)}^2+||\hat{u}_2^\epsilon||_{L^2(0,T;V)}^2)+c_1(||\hat{u}_1^\epsilon||_{L^2(0,T;H)}^2+||\hat{u}_2^\epsilon||_{L^2(0,T;H)}^2).
 \end{split}
 \end{equation}
 Similarly,
  \begin{equation}
\label{eq:main70}
 \begin{split}
\int_0^T\int_\Omega \div_x \mathcal{Q}(x,\xoe)   (\hat{u}_1^\epsilon-\hat{u}_2^\epsilon)^2  dx dt 
\leq c_2 (||\hat{u}_1^\epsilon||_{L^2(0,T;H)}^2+||\hat{u}_2^\epsilon||_{L^2(0,T;H)}^2)
 \end{split}
 \end{equation}
 and
   \begin{equation}
\label{eq:main71}
 \begin{split}
\int_0^T \int_{\Omega}q e^{-\lambda t} (\hat{u}_1^\epsilon+\hat{u}_2^\epsilon) dx dt 
\leq c_3+ c_4(||\hat{u}_1^\epsilon||_{L^2(0,T;H)}^2+||\hat{u}_2^\epsilon||_{L^2(0,T;H)}^2).
 \end{split}
 \end{equation}
Choosing $c_0$ sufficiently small and $\lambda$ sufficiently large, we deduce $\hat{u}_1^\epsilon$ and $\hat{u}_2^\epsilon$, thus, 
$u_1^\epsilon$ and $u_2^\epsilon$ are uniformly bounded in $L^\infty(0,T;H)$ and $L^2(0,T;V)$.
 \end{proof}

\begin{lemma}
\label{garding}
Assume $Q(x,y) \in L^\infty(\Omega\times Y)$ and $\kappa_i(x,y) \in L^\infty(\Omega\times Y)$.
There exists $C>0$ such that
\beq
\label{eq:bdd}
a((u_1, u_2),(v_1,v_2)) \leq C \big(||\nabla u_1 ||_H^2 +||\nabla u_2||_H^2 \big)^{\frac{1}{2}} \cdot \big(||\nabla v_1 ||_H^2 +|| \nabla v_2||_H^2 \big)^{\frac{1}{2}}
\eeq
for $(u_1, u_2),(v_1,v_2) \in W$.
And there exists $k \geq 0$ such that 
\beq
a((\phi_1,\phi_2),(\phi_1,\phi_2)) + k ||\phi_1||_H^2 + k ||\phi_2 ||_H^2 \geq \alpha (||\nabla \phi_1||^2_H + ||\nabla \phi_2||^2_H),
\eeq
for all $\phi_1, \phi_2 \in V$. Here, $C$ and $k$ depend on $\ep$.
\end{lemma}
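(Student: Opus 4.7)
\medskip

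\noindent\textbf{Proof plan for Lemma \ref{garding}.}
The plan is to verify the two inequalities directly from the four contributions to $a(\cdot,\cdot)$, relying only on the $L^\infty$ bounds on $\kappa_i$ and $Q$, the coercivity \eqref{eq:coercivity}, and Poincar\'e's inequality on $V=H^1_0(\Omega)$. Since $C$ and $k$ are allowed to depend on $\ep$, no use of the ``divergence'' representation $Q=\div_y\mathcal{Q}$ is required here; the crude bound $\|Q^\ep\|_{L^\infty(\Omega)}\le\|Q\|_{L^\infty(\Omega\times Y)}$ will suffice.

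For the continuity bound \eqref{eq:bdd}, I would estimate the two diffusion terms by Cauchy--Schwarz,
$\bigl|\int_\Omega\kappa_i^\ep\nabla u_i\cdot\nabla v_i\,dx\bigr|\le\|\kappa_i\|_{L^\infty}\|\nabla u_i\|_H\|\nabla v_i\|_H$,
and the two interaction terms by
\[
\Bigl|\tfrac{1}{\ep}\int_\Omega Q^\ep(u_1-u_2)v_1\,dx\Bigr|\le\tfrac{\|Q\|_{L^\infty}}{\ep}\,(\|u_1\|_H+\|u_2\|_H)\,\|v_1\|_H,
\]
and analogously for the $v_2$-term. Applying Poincar\'e's inequality $\|w\|_H\le C_P\|\nabla w\|_H$ for $w\in V$ converts each $\|\cdot\|_H$ into a gradient norm, and a final Cauchy--Schwarz in $\mathbb{R}^2$ collects the four contributions into the product form on the right-hand side of \eqref{eq:bdd}, with $C=C(\ep,\|\kappa_i\|_\infty,\|Q\|_\infty,C_P,\Omega)$.

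For the G\aa{}rding-type estimate, I would test $a(\cdot,\cdot)$ against $(\phi_1,\phi_2)$ and observe the key algebraic cancellation in the coupling terms:
\[
\tfrac{1}{\ep}\int_\Omega Q^\ep(\phi_1-\phi_2)\phi_1\,dx+\tfrac{1}{\ep}\int_\Omega Q^\ep(\phi_2-\phi_1)\phi_2\,dx=\tfrac{1}{\ep}\int_\Omega Q^\ep(\phi_1-\phi_2)^2\,dx.
\]
Since $Q$ changes sign (by \eqref{eq:Qaverage}), this quadratic form need not be nonnegative, but it is bounded below by
$-\tfrac{\|Q\|_{L^\infty}}{\ep}\int_\Omega(\phi_1-\phi_2)^2\,dx\ge -\tfrac{2\|Q\|_{L^\infty}}{\ep}\bigl(\|\phi_1\|_H^2+\|\phi_2\|_H^2\bigr)$.
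Combining this with the coercivity bound $\int_\Omega\kappa_i^\ep|\nabla\phi_i|^2\,dx\ge\underline{\kappa}\|\nabla\phi_i\|_H^2$ from \eqref{eq:coercivity}, I obtain
\[
a((\phi_1,\phi_2),(\phi_1,\phi_2))\ge\underline{\kappa}\bigl(\|\nabla\phi_1\|_H^2+\|\nabla\phi_2\|_H^2\bigr)-\tfrac{2\|Q\|_{L^\infty}}{\ep}\bigl(\|\phi_1\|_H^2+\|\phi_2\|_H^2\bigr),
\]
so the claim holds with $\alpha=\underline{\kappa}$ and $k=2\|Q\|_{L^\infty}/\ep$.

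There is essentially no obstacle here; the only subtlety to flag is the sign-indefiniteness of $Q$, which is exactly why the G\aa{}rding inequality requires a lower-order correction $k\|\phi_i\|_H^2$ rather than the cleaner coercivity one would get if $Q\ge 0$. Because the bad $1/\ep$ factor is swept into $k$, the resulting constants depend on $\ep$, which is acceptable for the purpose of establishing existence and uniqueness at each fixed $\ep$ via the standard Lions--Magenes theory for parabolic systems.
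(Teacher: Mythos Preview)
Your proposal is correct and follows essentially the same route as the paper: the paper also writes out $a((u_1,u_2),(u_1,u_2))$ to obtain the term $\tfrac{1}{\ep}\int_\Omega Q^\ep(u_1-u_2)^2\,dx$, bounds the diffusion terms below by $\underline{\kappa}(\|\nabla u_1\|_H^2+\|\nabla u_2\|_H^2)$ via \eqref{eq:coercivity}, and absorbs the sign-indefinite $Q$-term into $-k(\|u_1\|_H^2+\|u_2\|_H^2)$ by Cauchy--Schwarz/Young (the paper leaves the boundedness \eqref{eq:bdd} as ``not difficult''). Your version is more explicit, supplying the concrete constants $\alpha=\underline{\kappa}$ and $k=2\|Q\|_{L^\infty}/\ep$, but the argument is the same.
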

\begin{proof}
It is not difficult to show (\ref{eq:bdd}).
Since $Q \in C(\bar\Omega\times\bar Y)$, we have
\beq
\bsp
a((u_1,u_2), (u_1,u_2))\\
=
\int_\Omega \kappa_1^\ep \nabla u_1\cdot \nabla u_1 dx
+ \int_\Omega \kappa_2^\ep \nabla u_2 \cdot\nabla u_2 dx
+ \frac{1}{\ep}\int_\Omega Q^\ep (u_1-u_2)^2 dx\\
\geq
\underline{\kappa} (\norm{\nabla u_1}_{H}^2+\norm{\nabla u_2}_{H}^2)
- k( \norm{u_1}_{H}^2+\norm{u_2}_{H}^2)
\end{split}
\eeq
for some $k > 0$ depending on $\ep$. The last inequality follows from Cauchy-Schwarz and Young's inequalities.
\end{proof}

\begin{theorem}
\label{unique_original}
There exists a unique solution for problem (\ref{eq:var}).
\end{theorem}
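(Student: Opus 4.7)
The plan is to prove existence by the Faedo--Galerkin method and uniqueness by a standard energy argument, in both cases leveraging the a priori estimate already established in Theorem \ref{unibound} and the G{\aa}rding-type inequality in Lemma \ref{garding}.

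For existence, I would first pick a countable basis $\{w_k\}_{k\geq 1}$ of $V$ which is orthonormal in $H$ (for example the eigenfunctions of the Dirichlet Laplacian on $\Omega$). Writing $u_i^{\ep,n}(t,x) = \sum_{k=1}^{n} c_{ik}^n(t) w_k(x)$ for $i=1,2$, the projected version of \eqref{eq:var} becomes a linear first-order ODE system in $(c_{1k}^n,c_{2k}^n)$. The mass matrix $M_{jk}^{(i)} = \int_\Omega {\mathcal C}_{ii}^\ep w_j w_k\,dx$ is symmetric positive definite by \eqref{eq:coercivity}, so this ODE system has a unique global solution on $[0,T]$ with initial data the $H$-projection of $g_i$ onto $\mathrm{span}\{w_1,\ldots,w_n\}$. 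I would then apply exactly the manipulations in the proof of Theorem \ref{unibound}, with $\hat u_i^{\ep,n}$ themselves as test functions (which is legitimate because they lie in the Galerkin span), to obtain uniform-in-$n$ bounds of $u_i^{\ep,n}$ in $L^\infty(0,T;H)\cap L^2(0,T;V)$. From the Galerkin equation, the Riesz representative of $\partial_t u_i^{\ep,n}$ is bounded in $L^2(0,T;V')$, using once more the rewriting $Q^\ep = \ep\,\mathrm{div}\,{\mathcal Q}^\ep - \ep\,\mathrm{div}_x\,{\mathcal Q}^\ep$ to control the $1/\ep$ exchange term. By Aubin--Lions compactness, a subsequence converges weakly in $L^2(0,T;V)$ and strongly in $L^2(0,T;H)$; passing to the limit in the Galerkin identity yields a solution of \eqref{eq:var}. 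The initial conditions $u_i^\ep(0)=g_i$ are then recovered by the standard device of testing against functions of the form $\psi(t) w_k$ with $\psi\in C^\infty([0,T])$, $\psi(T)=0$, and integrating by parts in $t$.

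For uniqueness, let $(u_1^{(1)},u_2^{(1)})$ and $(u_1^{(2)},u_2^{(2)})$ be two solutions and set $w_i = u_i^{(1)} - u_i^{(2)}$, so that $w_i(0)=0$ and $(w_1,w_2)$ satisfies the homogeneous version of \eqref{eq:var}. Introducing $\hat w_i = e^{-\lambda t} w_i$ and using $\hat w_i$ as test functions (that is, $\check\phi_i = \hat w_i$ in the notation of Theorem \ref{unibound}), the same chain of manipulations carried out there yields, for $\lambda$ large enough,
\beqas
c\bigl(\|\hat w_1(T)\|_H^2 + \|\hat w_2(T)\|_H^2\bigr) + c\bigl(\|\hat w_1\|_{L^2(0,T;V)}^2 + \|\hat w_2\|_{L^2(0,T;V)}^2\bigr) \leq 0,
\eeqas
from which $\hat w_i \equiv 0$ and hence $u_i^{(1)} \equiv u_i^{(2)}$.

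The main obstacle is the sign-indefinite $\frac{1}{\ep}Q^\ep$ coupling term, which prevents the bilinear form $a$ from being coercive on $W$ in the naive sense and blocks a direct application of Lions' parabolic existence theorem. The resolution is precisely the device already exploited in the proof of Theorem \ref{unibound}: the assumption \eqref{eq:Qaverage} allows one to write $Q = \mathrm{div}_y{\mathcal Q}$, hence $Q^\ep = \ep\,\mathrm{div}\,{\mathcal Q}^\ep - \ep\,\mathrm{div}_x\,{\mathcal Q}^\ep$, which turns the singular term into a first-order operator with bounded coefficients after integration by parts. Once this is done, the $\ep$-dependent G{\aa}rding inequality of Lemma \ref{garding} is sufficient, since here we only need existence and uniqueness at each fixed $\ep$ rather than bounds that are uniform in $\ep$ (the uniform bound \eqref{eq:unibound} is what Theorem \ref{unibound} already provides).
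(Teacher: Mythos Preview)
Your proposal is correct and follows essentially the same approach as the paper: Faedo--Galerkin approximation with a basis of $V$, the ODE system solved via positive definiteness of the weighted mass matrices, uniform bounds obtained by the same manipulations as in Theorem~\ref{unibound}, weak limits extracted and the initial condition recovered by testing against $\psi(t)\omega_k$ with $\psi(T)=0$, and uniqueness via the exponential weight $e^{-\lambda t}$ combined with the G{\aa}rding inequality of Lemma~\ref{garding}. The only cosmetic difference is that you invoke Aubin--Lions for strong $L^2(0,T;H)$ convergence, which is harmless but not actually needed here since all terms in the Galerkin identity are linear and weak convergence suffices.
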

\begin{proof}
We follow the standard proof for parabolic equations in \cite{wlokapartial}.
We note that $u_1^\ep, u_2^\ep$ are weak solutions of (\ref{eq:var}) if for almost all $t\in [0,T]$
\beq
\label{eq:vartt}
\bsp
\int_\Omega {\mathcal C}_{11}^\epsilon{\partial u_{1}^\ep(t) \over \partial t} \phi_1  dx 
+  \int_\Omega \kappa_1^\epsilon\nabla u_{1}^\ep(t) \cdot \nabla  \phi_1 dx 
+  {1 \over \epsilon} \int_\Omega Q^\epsilon(u_{1}^\ep(t)-u_{2}^\ep(t))  \phi_1 dx 
=   \int_\Omega q  \phi_1dx \\
 \int_\Omega {\mathcal C}_{22}^\epsilon{\partial u_{2}^\ep(t) \over \partial t} \phi_2 dx 
+ \int_\Omega \kappa_2^\epsilon\nabla u_{2}^\ep(t) \cdot \nabla \phi_2  dx 
+  {1 \over \epsilon} \int_\Omega Q^\epsilon(u_{2}^\ep(t)-u_{1}^\ep(t)) \phi_2  dx 
=  \int_\Omega q \phi_2  dx 
\end{split}
\eeq
for all $\phi_1, \phi_2 \in V$.
Let $\{\omega_{k}\}$ be an orthogonal basis of $V$ and an orthonormal basis of $H$.
For fixed integer $m > 0$, we consider functions
\beq
\bsp
u_{1m}^\ep(t) = \sum\limits_{k=1}^m d_{1m}^k(t) \omega_k, \ \
u_{2m}^\ep(t) = \sum\limits_{k=1}^m d_{2m}^k(t) \omega_k,
\end{split}
\eeq
where the coefficients $d_{1m}^k$, $d_{2m}^k$ satisfy
\beq
\bsp
d_{1m}^k(0) = \int_\Omega g_1 \omega_k dx,\ \
d_{2m}^k(0) = \int_\Omega g_2 \omega_k dx \\
\end{split}
\eeq
and
\beq
\label{eq:vart}
\bsp
\int_\Omega {\mathcal C}_{11}^\epsilon{\partial u_{1m}^\ep(t) \over \partial t} \omega_{k_1}  dx 
+  \int_\Omega \kappa_1^\epsilon\nabla u_{1m}^\ep(t)\cdot \nabla \omega_{k_1} dx 
+  {1 \over \epsilon} \int_\Omega Q^\epsilon(u_{1m}^\ep(t)-u_{2m}^\ep(t)) \omega_{k_1} dx \\
=   \int_\Omega q \omega_{k_1}dx \\
 \int_\Omega {\mathcal C}_{22}^\epsilon{\partial u_{2m}^\ep(t) \over \partial t}\omega_{k_2}  dx 
+ \int_\Omega \kappa_2^\epsilon\nabla u_{2m}^\ep(t) \cdot\nabla \omega_{k_2} dx 
+  {1 \over \epsilon} \int_\Omega Q^\epsilon(u_{2m}^\ep(t)-u_{1m}^\ep(t)) \omega_{k_2} dx \\
=  \int_\Omega q \omega_{k_2} dx 
\end{split}
\eeq
a.e. on [0,T], where $k_1,k_2 = 1,2,\dots, m$.
This problem can be written as a system of ODEs 
\beq
\label{eq:ODE}
\bsp
 \sum_{l=1}^m  [M_{1}]_{kl} \frac{d}{dt} d_{1m}^k(t) + \sum_{l=1}^m [A_1+M_{Q}]_{kl} d_{1m}^l(t) - \sum_{l=1}^m [M_Q]_{kl} d_{2m}^l (t)= \int q\omega_k dx\\
 \sum_{l=1}^m [M_{2}]_{kl} \frac{d}{dt} d_{2m}^k(t) + \sum_{l=1}^m [A_2+M_{Q}]_{kl} d_{2m}^l(t) - \sum_{l=1}^m [M_Q]_{kl} d_{1m}^l (t)= \int a\omega_k dx
\end{split}
\eeq
for $k = 1, 2, \dots , m$, where 
\beq
[M_{i}]_{kl} = \int_\Omega C_{ii}^\ep \omega_k \omega_l dx, \
[M_{Q}]_{kl} = {1 \over \ep}\int_\Omega Q^\ep \omega_k \omega_l dx,\
[A_i]_{kl} = \int_\Omega \kappa_i^\ep \nabla \omega_k\cdot \nabla \omega_l dx.
\eeq
Since $M_{1}$ and $M_{2}$ are positive definite and symmetric Gram matrices, they are invertible. Hence, (\ref{eq:ODE}) has unique solutions.

It can be shown that $u_{1m}^\ep, u_{2m}^\ep$ are uniformly bounded in both $L^2(0,T;V),\, L^\infty(0,T;H)$ and
$ {\mathcal C}_{11}^\epsilon{\partial u_{1m}^\ep \over \partial t}, {\mathcal C}_{22}^\epsilon{\partial u_{2m}^\ep \over \partial t}$ 
are uniformly bounded in $L^2(0,T;V')$ for all $m$.  The proof is similar to that of \ref{unibound} and \ref{weakconv}.

From these results, we deduce that there exist functions $u_1^\ep, u_2^\ep$, $\eta_1^\ep, \eta_2^\ep$ such that
\beq
\bsp
u_{im}^\ep \rightharpoonup u_i^\ep \ \text{in}  \ L^2(0,T;V),\ \ \
 {\mathcal C}_{ii}^\epsilon{\partial u_{im}^\ep\over \partial t}
 \rightharpoonup \ \eta_i^\ep \ \text{in}  \ L^2(0,T;V'), \ \ i = 1, 2.
\end{split}
\eeq
Let $\psi_1(t), \psi_2(t) \in C^1[0,T]$ with $\psi_1(T)=\psi_2(T) = 0$. Let $\phi_{1k} = \psi_1 \omega_{k}$, $\phi_{2k} = \psi_2 \omega_{k}$. From (\ref{eq:vart}) 
 we get
\beq
\label{eq:vart1}
\bsp
\int_0^T \int_\Omega {\mathcal C}_{11}^\epsilon{\partial u_{1m}^\ep \over \partial t} \phi_{1k}  dx dt
+ \int_0^T \int_\Omega \kappa_1^\epsilon\nabla u_{1m}^\ep\cdot \nabla \phi_{1k}  dx dt
+  {1 \over \epsilon}\int_0^T \int_\Omega Q^\epsilon(u_{1m}^\ep-u_{2m}^\ep) \phi_{1k}  dx dt\\
=  \int_0^T \int_\Omega q \phi_{1k} dx dt\\
\int_0^T \int_\Omega {\mathcal C}_{22}^\epsilon{\partial u_{2m}^\ep \over \partial t}\phi_{2k}   dx dt
+ \int_0^T\int_\Omega \kappa_2^\epsilon\nabla u_{2m}^\ep\cdot \nabla \phi_{2k} dx dt
+  {1 \over \epsilon} \int_0^T\int_\Omega Q^\epsilon(u_{2m}^\ep-u_{1m}^\ep) \phi_{2k}  dxdt \\
= \int_0^T \int_\Omega q \phi_{2k}  dx dt.
\end{split}
\eeq
Since $\phi(T) = 0$, integrating by parts we obtain
\beq
\label{eq:uniq1}
\bsp
-\int_0^T \int_\Omega {\mathcal C}_{11}^\epsilon u_{1m}^\ep \frac{\partial\phi_{1k} }{\partial t} dx dt
+ \int_0^T \int_\Omega \kappa_1^\epsilon\nabla u_{1m}^\ep \cdot\nabla \phi_{1k}  dx dt
+  {1 \over \epsilon}\int_0^T \int_\Omega Q^\epsilon(u_{1m}^\ep-u_{2m}^\ep) \phi_{1k}  dx dt\\
=  \int_0^T \int_\Omega q \phi_{1k} dx dt + \int_\Omega {\mathcal C}_{11}^\ep u_{1m}^\ep(0) \phi_{1k}(0) dx \\
-\int_0^T \int_\Omega {\mathcal C}_{22}^\epsilon u_{2m}^\ep \frac{\partial\phi_{2k} }{\partial t} dx dt
+ \int_0^T\int_\Omega \kappa_2^\epsilon\nabla u_{2m}^\ep \cdot\nabla \phi_{2k} dx dt
+  {1 \over \epsilon} \int_0^T\int_\Omega Q^\epsilon(u_{2m}^\ep-u_{1m}^\ep) \phi_{2k}  dxdt \\
= \int_0^T \int_\Omega q \phi_{2k}  dx dt 
+ \int_\Omega {\mathcal C}_{22}^\ep u_{2m}^\ep(0) \phi_{2k}(0) dx.
\end{split}
\eeq
Note that $u_{1m}^\ep(0) \to g_1$, $u_{2m}^\ep(0) \to g_2$ in $H$ as $m \to \infty$. Passing to the limit, $m \to \infty$, we obtain 
\beq
\label{eq:uniq2}
\bsp
-\int_0^T \int_\Omega {\mathcal C}_{11}^\epsilon u_{1}^\ep \frac{\partial\phi_{1k} }{\partial t} dx dt
+ \int_0^T \int_\Omega \kappa_1^\epsilon\nabla u_{1}^\ep\cdot \nabla \phi_{1k}  dx dt
+  {1 \over \epsilon}\int_0^T \int_\Omega Q^\epsilon(u_{1}^\ep-u_{2}^\ep) \phi_{1k}  dx dt\\
=  \int_0^T \int_\Omega q \phi_{1k} dx dt 
+ \int_\Omega {\mathcal C}_{11}^\ep g_1 \phi_{1k}(0) dx, \\
-\int_0^T \int_\Omega {\mathcal C}_{22}^\epsilon u_{2}^\ep \frac{\partial\phi_{2k} }{\partial t} dx dt
+ \int_0^T\int_\Omega \kappa_2^\epsilon\nabla u_{2}^\ep\cdot \nabla \phi_{2k} dx dt
+  {1 \over \epsilon} \int_0^T\int_\Omega Q^\epsilon(u_{2}^\ep-u_{1}^\ep) \phi_{2k}  dxdt \\
= \int_0^T \int_\Omega q \phi_{2k}  dx dt 
+ \int_\Omega {\mathcal C}_{22}^\ep g_2 \phi_{2k}(0) dx.
\end{split}
\eeq
{
We partially integrate the first terms of the equations in (\ref{eq:uniq2}) and obtain
\beq
\label{eq:uniq3}
\bsp
\int_0^T \int_\Omega {\mathcal C}_{11}^\epsilon \frac{\partial u_{1}^\ep }{\partial t}\phi_{1k} dx dt
+ \int_\Omega  {\mathcal C}_{11}^\epsilon u_1^\ep(0) \phi_{1k}(0) dx
+ \int_0^T \int_\Omega \kappa_1^\epsilon\nabla u_{1}^\ep\cdot \nabla \phi_{1k}  dx dt\\
+  {1 \over \epsilon}\int_0^T \int_\Omega Q^\epsilon(u_{1}^\ep-u_{2}^\ep) \phi_{1k}  dx dt
=  \int_0^T \int_\Omega q \phi_{1k} dx dt 
+ \int_\Omega {\mathcal C}_{11}^\ep g_1 \phi_{1k}(0) dx, \\
\int_0^T \int_\Omega {\mathcal C}_{22}^\epsilon \frac{\partial u_{2}^\ep }{\partial t}\phi_{2k} dx dt
+ \int_\Omega  {\mathcal C}_{22}^\epsilon u_2^\ep(0) \phi_{2k}(0) dx
+ \int_0^T\int_\Omega \kappa_2^\epsilon\nabla u_{2}^\ep\cdot \nabla \phi_{2k} dx dt\\
+  {1 \over \epsilon} \int_0^T\int_\Omega Q^\epsilon(u_{2}^\ep-u_{1}^\ep) \phi_{2k}  dxdt 
= \int_0^T \int_\Omega q \phi_{2k}  dx dt 
+ \int_\Omega {\mathcal C}_{22}^\ep g_2 \phi_{2k}(0) dx.
\end{split}
\eeq
As this holds for all $\psi_1, \psi_2 \in {\mathcal D}((0,T))$, it follows that
\beq
\label{eq:vart2}
\bsp
\int_\Omega {\mathcal C}_{11}^\epsilon{\partial u_{1}^\ep(t) \over \partial t} \omega_{k}  dx 
+  \int_\Omega \kappa_1^\epsilon\nabla u_{1}^\ep(t) \cdot\nabla \omega_{k} dx 
+  {1 \over \epsilon} \int_\Omega Q^\epsilon(u_{1}^\ep(t)-u_{2}^\ep(t))\omega_{k} dx 
=   \int_\Omega q \omega_{k}dx, \\
 \int_\Omega {\mathcal C}_{22}^\epsilon{\partial u_{2}^\ep(t) \over \partial t}\omega_{k}  dx 
+ \int_\Omega \kappa_2^\epsilon\nabla u_{2}^\ep(t)\cdot \nabla \omega_{k} dx 
+  {1 \over \epsilon} \int_\Omega Q^\epsilon(u_{2}^\ep(t)-u_{1}^\ep(t)) \omega_{k} dx 
=  \int_\Omega q \omega_{k} dx 
\end{split}
\eeq
a.e. on [0,T], and
\beq
\label{eq:init1}
\bsp
 \int_\Omega  {\mathcal C}_{11}^\epsilon u_1^\ep(0) \omega_{k}(0) dx
 = \int_\Omega {\mathcal C}_{11}^\ep g_1 \omega_{k}(0) dx, \
  \int_\Omega  {\mathcal C}_{22}^\epsilon u_2^\ep(0) \omega_{k}(0) dx
 = \int_\Omega {\mathcal C}_{22}^\ep g_2 \omega_{k}(0) dx
\end{split}
\eeq
for all $k$. Thus, from (\ref{eq:vart2}) and (\ref{eq:init1}), we deduce
\beq
\label{eq:vart3}
\bsp
\int_\Omega {\mathcal C}_{11}^\epsilon{\partial u_{1}^\ep(t) \over \partial t} \phi_1  dx 
+  \int_\Omega \kappa_1^\epsilon\nabla u_{1}^\ep(t)\cdot \nabla \phi_1 dx 
+  {1 \over \epsilon} \int_\Omega Q^\epsilon(u_{1}^\ep(t)-u_{2}^\ep(t)) \phi_1 dx 
=   \int_\Omega q\phi_1dx, \\
 \int_\Omega {\mathcal C}_{22}^\epsilon{\partial u_{2}^\ep(t) \over \partial t}\phi_2  dx 
+ \int_\Omega \kappa_2^\epsilon\nabla u_{2}^\ep(t) \cdot\nabla \phi_2dx 
+  {1 \over \epsilon} \int_\Omega Q^\epsilon(u_{2}^\ep(t)-u_1^\ep(t)) \phi_2 dx 
=  \int_\Omega q \phi_2 dx 
\end{split}
\eeq
a.e. on [0,T], for all $\phi_1, \phi_2 \in V$
}
{and ${\mathcal C}_{ii}^\ep u_i^\ep(0) = {\mathcal C}_{ii}^\ep g_i$, hence, $u_i^\ep(0) = g_i$.}
Thus, $u_1^\ep, u_2^\ep$ are solutions of (\ref{eq:var}).
We now show the uniqueness of the solutions.
Assume $u_1^\ep, u_2^\ep$, $v_1^\ep, v_2^\ep$ are two solution sets of (\ref{eq:var}).
We let $  u_1^\ep-v_1^\ep=\delta_1$, $ u_2^\ep-v_2^\ep = \delta_2$. Then from (\ref{eq:var}), we get

\beq
\bsp
\int_0^T \int_\Omega {\mathcal C}_{11}^\epsilon(x){\partial \delta_1(t,x) \over \partial t} \phi_1(t,x) dx dt 
&+ \int_0^T \int_\Omega \kappa_1^\epsilon(x)\nabla \delta_1(t,x) \cdot\nabla \phi_1(t,x) dx dt\\
&+  {1 \over \epsilon} \int_0^T \int_\Omega Q^\epsilon(x)(\delta_1 (t,x)-\delta_2(t,x)) \phi_1(t,x)dx dt
=  0,\\
\int_0^T \int_\Omega {\mathcal C}_{22}^\epsilon(x){\partial \delta_2(t,x) \over \partial t} \phi_2(t,x) dx dt 
&+ \int_0^T \int_\Omega \kappa_2^\epsilon(x)\nabla \delta_2(t,x)\cdot \nabla \phi_2(t,x) dx dt\\
&+  {1 \over \epsilon} \int_0^T \int_\Omega Q^\epsilon(x)(\delta_2(t,x)-\delta_1(t,x)) \phi_2(t,x)dx dt
=  0
\end{split}
\eeq
for all $\phi_1, \phi_2 \in L^2(0,T;V)$.
Letting $\hat{\delta}_1(t) = \delta_1(t) e^{-\lambda t}$, $\hat{\delta}_2(t) = \delta_2(t) e^{-\lambda t}$, 
$\check{\phi}_1 = \phi_1 e^{\lambda t}$ and $\check{\phi}_2 = \phi_2 e^{\lambda t}$,
 we have
\beq
\bsp
\int_0^T \int_\Omega {\mathcal C}_{11}^\epsilon{\partial \hat{\delta}_1\over \partial t} \check{\phi}_1 dx dt
+ \int_0^T \int_\Omega {\mathcal C}_{22}^\epsilon{\partial \hat{\delta}_2 \over \partial t} \check{\phi}_2dx dt 
+\lambda \int_0^T \int_\Omega {\mathcal C}_{11}^\ep \hat{\delta}_1 \check{\phi}_1 dx dt
+\lambda \int_0^T \int_\Omega {\mathcal C}_{22}^\ep \hat{\delta}_2 \check{\phi}_2 dx dt\\
+\int_0^T a((\hat{\delta}_1(t),\hat{\delta}_2(t)),(\check{\phi}_1(t),\check{\phi}_2(t))) dt=0.
\end{split}
\eeq
Letting $\check{\phi}_i = \hat{\delta}_i$, we have
\beq
\bsp
\frac{1}{2} \int_\Omega {\mathcal C}_{11}^\epsilon|\hat{\delta}_1(T)|^2 dx
+\frac{1}{2} \int_\Omega {\mathcal C}_{22}^\epsilon|\hat{\delta}_2(T)|^2 dx
+\lambda \int_0^T \int_\Omega {\mathcal C}_{11}^\ep |\hat{\delta}_1|^2 dx dt
+\lambda \int_0^T \int_\Omega {\mathcal C}_{22}^\ep |\hat{\delta}_2|^2 dx dt\\
+\int_0^T a((\hat{\delta}_1(t),\hat{\delta}_2(t)),(\hat{\delta}_1(t),\hat{\delta}_2(t))) dt=0.
\end{split}
\eeq
Note that $\hat{\delta}_i(0) = 0$.
By Lemma \ref{garding}, choosing sufficiently large $\lambda$, we have
\beq
\bsp
\frac{1}{2} \int_\Omega {\mathcal C}_{11}^\epsilon|\hat{\delta}_1(T)|^2 dx
+\frac{1}{2} \int_\Omega {\mathcal C}_{22}^\epsilon|\hat{\delta}_2(T)|^2 dx
+\alpha \int_0^T (||\nabla \hat{\delta}_1(t)||^2_H + ||\nabla \hat{\delta}_2(t)||^2_H) dt\\
\leq
\frac{1}{2} \int_\Omega {\mathcal C}_{11}^\epsilon|\hat{\delta}_1(T)|^2 dx
+\frac{1}{2} \int_\Omega {\mathcal C}_{22}^\epsilon|\hat{\delta}_2(T)|^2 dx
+\lambda \int_0^T \int_\Omega {\mathcal C}_{11}^\ep |\hat{\delta}_1|^2 dx dt
+\lambda \int_0^T \int_\Omega {\mathcal C}_{22}^\ep |\hat{\delta}_2|^2 dx dt\\
+\int_0^T a((\hat{\delta}_1(t),\hat{\delta}_2(t)),(\hat{\delta}_1(t),\hat{\delta}_2(t))) dt =0.
\end{split}
\eeq
This implies $\hat{\delta}_1 = \hat{\delta}_2 = 0$, thus, $\delta_1 = \delta_2 = 0$ a.e. on $[0,T]\times \Omega$.
We deduce $u_1^\ep = v_1^\ep, \ u_2^\ep = v_2^\ep$.

\end{proof}

Now we show the uniqueness of a solution of the homogenized system \eqref{eq:main9}.
The homogenized problem (\ref{eq:main9}) can be written in variational form. We find $u_{10}, u_{20} \in L^2(0,T; V)$ such that $\frac{\partial u_{10}}{\partial t}, \frac{\partial u_{20}}{\partial t} \in L^2(0,T; V')$ satisfying
\begin{equation}
\label{eq:var_hom}
\begin{split}
\int^T_0 \int _\Omega\left( \int_Y {\mathcal C}_{11} dy\right){\partial u_{10} \over \partial t} \phi_1 dx dt
+ \int^T_0 \int _\Omega \kappa_1^*\nabla u_{10} \cdot\nabla \phi_1 dx dt
+\int^T_0 \int _\Omega (\int_Y \kappa_1 \nabla_y M_1  dy)\cdot\nabla \phi_1 (u_{20}-u_{10}) dx dt
\\ + \int^T_0 \int _\Omega [(\int_Y QN^i_1  dy)\frac{\partial u_{10}}{\partial x_i} - (\int_Y QN^i_2  dy)\frac{\partial u_{20}}{\partial x_i}] \phi_1 dx dt
- \int^T_0 \int _\Omega(\int_YQ(M_1+M_2)  dy)(u_{10}-u_{20}) \phi_1 dx dt 
\\= \int^T_0 \int _\Omega q \phi_1 dx dt,\\
\int^T_0 \int _\Omega \left(\int_Y {\mathcal C}_{22} dy\right){\partial u_{20} \over \partial t} \phi_2 dx dt
+ \int^T_0 \int _\Omega \kappa_2^*\nabla u_{20} \cdot \nabla \phi_2 dx dt
+\int^T_0 \int _\Omega (\int_Y \kappa_2 \nabla_y M_2  dy)\cdot \nabla \phi_2 (u_{10}-u_{20}) dx dt
\\ + \int^T_0 \int _\Omega [(\int_Y QN^i_2  dy)\frac{\partial u_{20}}{\partial x_i} - (\int_Y QN^i_1  dy)\frac{\partial u_{10}}{\partial x_i}] \phi_2 dx dt
- \int^T_0 \int _\Omega(\int_YQ(M_1+M_2)  dy)(u_{20}-u_{10}) \phi_2 dx dt 
\\= \int^T_0 \int _\Omega q \phi_2 dx dt,
\end{split}
\end{equation} 
for all $\phi_1, \phi_2 \in L^2(0,T;V)$.
We define the bilinear form $b : W \times W \to \mathbb{R}$ by
\beq
\bsp
b((u_{10}(t),u_{20}(t)),(\phi_1(t), \phi_2(t))) =\\
\int _\Omega \kappa_1^*\nabla u_{10} \cdot\nabla \phi_1 dx 
+ \int _\Omega (\int_Y \kappa_1 \nabla_y M_1  dy)\cdot \nabla \phi_1 (u_{20}-u_{10})dx
\\ + \int _\Omega [(\int_Y QN^i_1  dy)\frac{\partial u_{10}}{\partial x_i} - (\int_Y QN^i_2  dy)\frac{\partial u_{20}}{\partial x_i}] \phi_1 dx 
-  \int _\Omega(\int_YQ(M_1+M_2)  dy)(u_{10}-u_{20}) \phi_1 dx\\
+  \int _\Omega \kappa_2^*\nabla u_{20}\cdot \nabla \phi_2 dx 
+ \int _\Omega (\int_Y \kappa_2 \nabla_y M_2  dy)\cdot \nabla \phi_2 (u_{10}-u_{20}) dx 
\\ +  \int _\Omega [(\int_Y QN^i_2  dy)\frac{\partial u_{20}}{\partial x_i} - (\int_Y QN^i_1  dy)\frac{\partial u_{10}}{\partial x_i}] \phi_2 dx
-\int _\Omega(\int_YQ(M_1+M_2)  dy)(u_{20}-u_{10}) \phi_2 dx.
\end{split}
\eeq
\begin{lemma}
\label{garding_hom}
Assume $Q, \kappa_j \in C(\bar{\Omega};C(\bar{Y}))$, $ N^i_j ,\ M_j \in C(\bar{\Omega};C^1(\bar{Y}))$ for $j = 1,\ 2$.
There exists $C>0$ such that
\beq
\label{eq:b1}
b((u_1, u_2),(v_1,v_2)) \leq C \big(||\nabla u_1 ||_H^2 +||\nabla u_2||_H^2 \big)^{\frac{1}{2}} \cdot \big(||\nabla v_1 ||_H^2 +|| \nabla v_2||_H^2 \big)^{\frac{1}{2}}
\eeq
for $(u_1, u_2),(v_1,v_2) \in W$.
There exists $k \geq 0$ such that 
\beq
\label{eq:garding_hom}
b((u_1,u_2),(u_1,u_2)) + k ||u_1||_H^2 + k ||u_2 ||_H^2 \geq \alpha (||\nabla u_1||^2_H + ||\nabla u_2||^2_H)
\eeq
for all $u_1, u_2 \in V$, for a constant $\alpha>0$.
\end{lemma}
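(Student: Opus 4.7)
The plan is to handle the two estimates separately. For the boundedness \eqref{eq:b1}, I expand $b((u_1,u_2),(v_1,v_2))$ into its ten constituent integrals and apply Cauchy--Schwarz to each one. Under the regularity hypotheses, the four effective coefficients $\kappa_j^\ast$, $\int_Y \kappa_j \nabla_y M_j\, dy$, $\int_Y QN_j^i\, dy$ and $\int_Y Q(M_1+M_2)\, dy$ are all bounded on $\bar\Omega$. Since each summand of $b$ is of the form (bounded coefficient)$\cdot$(derivative or value of $u_j$)$\cdot$(derivative or value of $v_k$), and since the Poincar\'e inequality on $V=H_0^1(\Omega)$ lets us dominate any $\|u\|_H$ factor by $\|\nabla u\|_H$, every term is bounded by $C\|\nabla u_j\|_H\|\nabla v_k\|_H$. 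Summing the contributions and using the inequality $\sum_{j,k} a_j b_k \leq (\sum_j a_j^2)^{1/2}(\sum_k b_k^2)^{1/2}$ then yields \eqref{eq:b1}.

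For the G\aa{}rding estimate \eqref{eq:garding_hom} I substitute $\phi_j = u_j$ and reorganise. The principal part, by the symmetry and positive definiteness of the homogenized coefficients $\kappa_j^\ast$ (the standard fact quoted after \eqref{eq:main10}), satisfies
\begin{equation*}
\int_\Omega \kappa_1^\ast \nabla u_1 \cdot \nabla u_1\, dx + \int_\Omega \kappa_2^\ast \nabla u_2 \cdot \nabla u_2\, dx \;\geq\; \alpha_0 \bigl(\|\nabla u_1\|_H^2 + \|\nabla u_2\|_H^2\bigr)
\end{equation*}
for some $\alpha_0>0$. The two zeroth-order interaction contributions collapse telescopically into $-\int_\Omega \bigl(\int_Y Q(M_1+M_2)\, dy\bigr)(u_1-u_2)^2\, dx$, since $(u_1-u_2)u_1+(u_2-u_1)u_2 = (u_1-u_2)^2$, and the four convection-type terms combine into
\begin{equation*}
\int_\Omega \Bigl(\int_Y QN_1^i\, dy\Bigr)\frac{\partial u_1}{\partial x_i}(u_1-u_2)\, dx \;-\; \int_\Omega \Bigl(\int_Y QN_2^i\, dy\Bigr)\frac{\partial u_2}{\partial x_i}(u_1-u_2)\, dx.
\end{equation*}

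Each of these surviving first-order cross terms, together with the mixed first-order terms $\int (\int_Y \kappa_j \nabla_y M_j\, dy)\cdot\nabla u_j (u_k-u_j)\, dx$, has the form (bounded coefficient)$\cdot(\nabla u_j)\cdot(u_1-u_2)$, and I bound it by Young's inequality $|ab|\leq \delta a^2 + (4\delta)^{-1}b^2$, producing a contribution of size $\delta(\|\nabla u_1\|_H^2+\|\nabla u_2\|_H^2) + C_\delta(\|u_1\|_H^2+\|u_2\|_H^2)$. Choosing $\delta$ small enough that the aggregated gradient part is at most $(\alpha_0/2) (\|\nabla u_1\|_H^2+\|\nabla u_2\|_H^2)$, and then taking $k$ larger than the sum of the resulting $C_\delta$'s plus the sup-norm bound on $|\int_Y Q(M_1+M_2)\, dy|$, yields \eqref{eq:garding_hom} with $\alpha=\alpha_0/2$. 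The only conceptual wrinkle, which is not actually an obstacle, is that $\int_Y Q(M_1+M_2)\, dy$ has a definite sign making the zeroth-order term genuinely negative, and $\int_Y QN_j^i\, dy$ has indeterminate sign (per the remark after \eqref{eq:main10}); the G\aa{}rding framework however only demands that these lower-order perturbations be absorbable into the $k\|u_j\|_H^2$ reservoir, for which $L^\infty$ bounds alone suffice and no sign information is required.
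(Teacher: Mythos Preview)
Your proposal is correct and follows essentially the same approach as the paper: bound each term of $b$ using Cauchy--Schwarz and Poincar\'e for \eqref{eq:b1}, and for \eqref{eq:garding_hom} exploit the positive definiteness of $\kappa_j^\ast$ to get the coercive principal part, then absorb all first- and zeroth-order cross terms via Young's inequality with a small parameter. The only cosmetic difference is that you first combine the symmetric lower-order terms (the zeroth-order pieces into $-\int(\int_Y Q(M_1+M_2)\,dy)(u_1-u_2)^2$ and the convection pieces into factors of $(u_1-u_2)$) before bounding them, whereas the paper bounds every term individually without this preliminary simplification; this changes nothing substantive.
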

\begin{proof}
We first show \eqref{eq:b1}. We have
\beq
\bsp
b((u_1, u_2),(v_1,v_2))\\
\leq
c (||\nabla u_1 ||_H \cdot ||\nabla v_1||_H +   ||\nabla v_1 ||_H \cdot ||u_1||_H +||\nabla v_1 ||_H \cdot ||u_2||_H
 +  ||\nabla u_1 ||_H \cdot || v_1||_H\\
+ ||\nabla u_2 ||_H \cdot || v_1||_H 
+  || u_1 ||_H \cdot || v_1||_H+   || u_2 ||_H \cdot || v_1||_H
+ ||\nabla u_2 ||_H \cdot ||\nabla v_2||_H +  ||\nabla v_2 ||_H \cdot ||u_2||_H \\
+||\nabla v_2 ||_H \cdot ||u_1||_H
+   ||\nabla u_2 ||_H \cdot || v_2||_H
+ ||\nabla u_1 ||_H \cdot || v_2||_H 
+   || u_2 ||_H \cdot || v_2||_H+  || u_2 ||_H \cdot || v_2||_H)
\\
\leq
c \big(||\nabla u_1 ||_H^2 +||\nabla u_2||_H^2 +|| u_1 ||_H^2 +|| u_2||_H^2\big)^{\frac{1}{2}} 
\cdot \big(||\nabla v_1 ||_H^2 +|| \nabla v_2||_H^2+ || v_1 ||_H^2 +|| v_2||_H^2\big)^{\frac{1}{2}}\\
\leq
 C \big(||\nabla u_1 ||_H^2 +||\nabla u_2||_H^2 \big)^{\frac{1}{2}} \cdot \big(||\nabla v_1 ||_H^2 +|| \nabla v_2||_H^2 \big)^{\frac{1}{2}}.
\end{split}
\eeq
The last inequality follows from Poincare inequality. 
We now prove (\ref{eq:garding_hom}). As $\kappa_1^*$ and $\kappa_2^*$ are positive definite, we have
\beq
\bsp
b((u_1, u_2),(u_1,u_2))\\
\geq
c_1 (||\nabla u_1||_H^2+||\nabla u_2||_H^2)
- c_2 ( ||\nabla u_1 ||_H \cdot ||u_1||_H +||\nabla u_1 ||_H \cdot ||u_2||_H
 +  ||\nabla u_1 ||_H \cdot || u_1||_H\\
+ ||\nabla u_2 ||_H \cdot || u_1||_H 
+  || u_1 ||_H \cdot || u_1||_H+   || u_2 ||_H \cdot || u_1||_H
 +  ||\nabla u_2 ||_H \cdot ||u_2||_H 
+||\nabla u_2 ||_H \cdot ||u_1||_H\\
+   ||\nabla u_2 ||_H \cdot || u_2||_H
+ ||\nabla u_1 ||_H \cdot || u_2||_H 
+   || u_2 ||_H \cdot || u_2||_H+  || u_2 ||_H \cdot || u_2||_H)\\
\geq
c_1 (||\nabla u_1||_H^2+||\nabla u_2||_H^2)
-  ( \varepsilon_1||\nabla u_1 ||_H^2 +\delta_1 ||u_1||_H^2 + \varepsilon_2||\nabla u_2||_H^2 +\delta_2 ||u_2||_H^2).\\
\end{split}
\eeq
Choosing $\varepsilon_1, \ \varepsilon_2$ small enough, we get the conclusion.
\end{proof}
\begin{theorem}
\label{unique_hom}
There exists a unique solution for problem (\ref{eq:var_hom}).
\end{theorem}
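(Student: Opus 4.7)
The plan is to mirror the structure of the proof of Theorem \ref{unique_original}, using a Faedo--Galerkin approximation together with the G\r{a}rding-type inequality of Lemma \ref{garding_hom}. Let $\{\omega_k\}$ be an orthogonal basis of $V$ that is orthonormal in $H$, and define approximate solutions
\beqq
u_{10}^m(t)=\sum_{k=1}^m d_{10}^k(t)\omega_k,\qquad u_{20}^m(t)=\sum_{k=1}^m d_{20}^k(t)\omega_k,
\eeqq
with initial values given by the $L^2$-projections of $g_1,g_2$ onto $\mathrm{span}\{\omega_1,\ldots,\omega_m\}$, and with $(d_{10}^k,d_{20}^k)$ solving the finite-dimensional linear ODE system obtained by testing \eqref{eq:var_hom} (without the time integral) against $\omega_k$, $k=1,\ldots,m$. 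The mass matrices associated with $\int_Y {\mathcal C}_{ii}(x,y)dy$ are symmetric and positive definite by \eqref{eq:coercivity}, so this system has a unique solution on $[0,T]$, exactly as in the derivation of \eqref{eq:ODE}.

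Next I would derive a priori estimates. Testing the Galerkin equations against $(u_{10}^m,u_{20}^m)$ and using Lemma \ref{garding_hom}, we obtain, for $\lambda$ chosen large enough (after rescaling $\hat u_{i0}^m=u_{i0}^m e^{-\lambda t}$ as in Theorem \ref{unibound}),
\beqq
\bsp
&\tfrac12\sum_{i=1,2}\int_\Omega\Bigl(\int_Y{\mathcal C}_{ii}\,dy\Bigr)|\hat u_{i0}^m(T)|^2\,dx
+\alpha\int_0^T\bigl(\|\nabla \hat u_{10}^m\|_H^2+\|\nabla \hat u_{20}^m\|_H^2\bigr)\,dt\\
&\qquad\leq C\Bigl(\|q\|_{L^2((0,T)\times\Omega)}^2+\|g_1\|_H^2+\|g_2\|_H^2\Bigr),
\end{split}
\eeqq
where the convection terms with coefficients $\int_Y\kappa_j\nabla_yM_j\,dy$ and $\int_YQN_j^i\,dy$ are absorbed into the diffusion term via Cauchy--Schwarz and Young's inequality; this is exactly the role of the constant $k$ in Lemma \ref{garding_hom}. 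From the equation itself, a standard duality argument then yields a uniform bound on $\partial_tu_{i0}^m$ in $L^2(0,T;V')$. Thus $u_{i0}^m$ is uniformly bounded in $L^2(0,T;V)\cap L^\infty(0,T;H)$ with $\partial_tu_{i0}^m$ uniformly bounded in $L^2(0,T;V')$, and we can extract weakly convergent subsequences to limits $u_{i0}$ in the respective spaces.

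Passing to the limit in the Galerkin equations against test functions of the form $\psi(t)\omega_k$ with $\psi\in C^1[0,T]$, $\psi(T)=0$, and integrating by parts in time exactly as in \eqref{eq:uniq1}--\eqref{eq:uniq3}, we recover \eqref{eq:var_hom} and, simultaneously, the initial conditions $u_{10}(0)=g_1$, $u_{20}(0)=g_2$ in $H$. For uniqueness, suppose $(u_{10},u_{20})$ and $(v_{10},v_{20})$ are two solutions and set $\delta_i=u_{i0}-v_{i0}$; their differences satisfy \eqref{eq:var_hom} with $q=0$ and $\delta_i(0)=0$. Apply the exponential rescaling $\hat\delta_i=\delta_ie^{-\lambda t}$, test the rescaled equation by $(\hat\delta_1,\hat\delta_2)$, integrate over $[0,T]$, and invoke Lemma \ref{garding_hom}: choosing $\lambda$ large enough so that $\lambda\underline{C}\geq k$ forces
\beqq
\tfrac12\sum_{i=1,2}\int_\Omega\Bigl(\int_Y{\mathcal C}_{ii}\,dy\Bigr)|\hat\delta_i(T)|^2\,dx
+\alpha\int_0^T\bigl(\|\nabla\hat\delta_1\|_H^2+\|\nabla\hat\delta_2\|_H^2\bigr)\,dt\leq 0,
\eeqq
so $\hat\delta_1=\hat\delta_2=0$ a.e., hence $u_{i0}=v_{i0}$.

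The main obstacle is keeping track of the new convection-type couplings $\int_Y\kappa_j\nabla_yM_j\,dy\cdot\nabla\phi$ and $\int_YQN_j^i\,dy\,\partial_{x_i}u_{k0}$ in the a priori estimates: these are the features that distinguish the present homogenized system from the original two-scale problem, and they are precisely the reason the coercivity of $b$ is only G\r{a}rding-type rather than plain ellipticity. Once Lemma \ref{garding_hom} is in hand, however, the exponential rescaling trick $\hat u=ue^{-\lambda t}$ used in Theorem \ref{unique_original} neutralizes this obstacle and the rest of the argument is standard.
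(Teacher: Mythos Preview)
Your uniqueness argument matches the paper's essentially line for line: exponential rescaling $\hat\delta_i=\delta_ie^{-\lambda t}$, testing against $(\hat\delta_1,\hat\delta_2)$, and invoking Lemma \ref{garding_hom} with $\lambda$ large enough.

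For existence, however, you take a genuinely different route. The paper does not redo a Faedo--Galerkin construction for the homogenized system; instead it simply says ``The existence follows from Theorem \ref{weakconv}.'' That is, the weak limit $(u_{10},u_{20})$ of (a subsequence of) the multiscale solutions $(u_1^\ep,u_2^\ep)$ furnished by Section~\ref{sec:3} already satisfies \eqref{eq:var_hom} with the correct initial data, so existence is obtained as a by-product of the homogenization convergence. Your approach, by contrast, builds a solution of \eqref{eq:var_hom} directly and independently of the two-scale problem, using the Galerkin scheme and the G\r{a}rding inequality of Lemma \ref{garding_hom} to get the a priori bounds. Both are correct. The paper's shortcut is more economical in this context since the hard work was already done in Section~\ref{sec:3}; your argument is more self-contained and would stand on its own even without any reference to the original multiscale system, which is a cleaner logical structure if one wanted to study \eqref{eq:main9} in isolation.
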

\begin{proof} The existence follows from Theorem  \ref{weakconv}. We only prove the uniqueness.
Assume $(u_{10}, u_{20})$, $(v_{10}, v_{20})$ are two solutions of (\ref{eq:var_hom}).
We let $  u_{10}-v_{10}=\delta_1$, $ u_{20}-v_{20} = \delta_2$. From (\ref{eq:var_hom}), we obtain
\begin{equation}
\begin{split}
\int^T_0 \int _\Omega \int_Y {\mathcal C}_{11} dy{\partial \delta_1 \over \partial t} \phi_1 dx dt
+ \int^T_0 \int _\Omega \kappa_1^*\nabla \delta_1 \cdot\nabla \phi_1 dx dt
+\int^T_0 \int _\Omega (\int_Y \kappa_1 \nabla_y M_1  dy)\cdot\nabla \phi_1 (\delta_2-\delta_1) dx dt
\\ + \int^T_0 \int _\Omega [(\int_Y QN^i_1  dy)\frac{\partial \delta_1}{\partial x_i} - (\int_Y QN^i_2  dy)\frac{\partial \delta_2}{\partial x_i}] \phi_1 dx dt
- \int^T_0 \int _\Omega(\int_YQ(M_1+M_2)  dy)(\delta_1-\delta_2) \phi_1 dx dt 
\\=0,\\
\int^T_0 \int _\Omega \int_Y {\mathcal C}_{22} dy{\partial \delta_2 \over \partial t} \phi_2 dx dt
+ \int^T_0 \int _\Omega \kappa_2^*\nabla \delta_2 \cdot \nabla \phi_2 dx dt
+\int^T_0 \int _\Omega (\int_Y \kappa_2 \nabla_y M_2  dy)\cdot \nabla \phi_2 (\delta_1-\delta_2) dx dt
\\ + \int^T_0 \int _\Omega [(\int_Y QN^i_2  dy)\frac{\partial \delta_2}{\partial x_i} - (\int_Y QN^i_1  dy)\frac{\partial \delta_1}{\partial x_i}] \phi_2 dx dt
- \int^T_0 \int _\Omega(\int_YQ(M_1+M_2)  dy)(\delta_2-\delta_1) \phi_2 dx dt 
\\=0
\end{split}
\end{equation} 
for all $\phi_1, \phi_2 \in L^2(0,T;V)$.
Let $\hat{\delta}_1(t) = \delta_1(t) e^{-\lambda t}$, $\hat{\delta}_2(t) = \delta_2(t) e^{-\lambda t}$, 
$\check{\phi}_1 = \phi_1 e^{\lambda t}$ and $\check{\phi}_2 = \phi_2 e^{\lambda t}$. We have
\begin{equation}
\begin{split}
\int^T_0 \int _\Omega \int_Y {\mathcal C}_{11} dy{\partial \hat{\delta}_1 \over \partial t} \check{\phi}_1 dx dt
+\lambda \int^T_0 \int _\Omega \int_Y {\mathcal C}_{11} dy \hat{\delta}_1\check{\phi}_1 dx dt
+ \int^T_0 \int _\Omega \kappa_1^*\nabla \hat{\delta}_1 \cdot\nabla \check{\phi}_1 dx dt\\
+\int^T_0 \int _\Omega (\int_Y \kappa_1 \nabla_y M_1  dy)\cdot\nabla \check{\phi}_1 (\hat{\delta}_2-\hat{\delta}_1) dx dt
\\ + \int^T_0 \int _\Omega [(\int_Y QN^i_1  dy)\frac{\partial \hat{\delta}_1}{\partial x_i} - (\int_Y QN^i_2  dy)\frac{\partial \hat{\delta}_2}{\partial x_i}] \check{\phi}_1 dx dt
- \int^T_0 \int _\Omega(\int_YQ(M_1+M_2)  dy)(\hat{\delta}_1-\hat{\delta}_2) \check{\phi}_1 dx dt 
\\=0,\\
\int^T_0 \int _\Omega \int_Y {\mathcal C}_{22} dy{\partial \hat{\delta}_2 \over \partial t} \check{\phi}_2 dx dt
+\lambda \int^T_0 \int _\Omega \int_Y {\mathcal C}_{22} dy \hat{\delta}_2\check{\phi}_2dx dt
+ \int^T_0 \int _\Omega \kappa_2^*\nabla \hat{\delta}_2 \cdot \nabla \check{\phi}_2 dx dt\\
+\int^T_0 \int _\Omega (\int_Y \kappa_2 \nabla_y M_2  dy)\cdot \nabla \check{\phi}_2 (\hat{\delta}_1-\hat{\delta}_2) dx dt
\\ + \int^T_0 \int _\Omega [(\int_Y QN^i_2  dy)\frac{\partial \hat{\delta}_2}{\partial x_i} - (\int_Y QN^i_1  dy)\frac{\partial \hat{\delta}_1}{\partial x_i}] \check{\phi}_2 dx dt
- \int^T_0 \int _\Omega(\int_YQ(M_1+M_2)  dy)(\hat{\delta}_2-\hat{\delta}_1) \check{\phi}_2 dx dt 
\\=0.
\end{split}
\end{equation} 
We let $\check{\phi}_i = \hat{\delta}_i$.
Since $\hat{\delta}_i(0) = 0$, adding above 2 equations, we have
\beq
\bsp
\frac{1}{2} \int_\Omega \int_Y {\mathcal C}_{11} dy|\hat{\delta}_1(T)|^2 dx
+\frac{1}{2} \int_\Omega \int_Y {\mathcal C}_{22} dy|\hat{\delta}_2(T)|^2 dx\\
+\lambda \int_0^T \int_\Omega \int_Y {\mathcal C}_{11} dy |\hat{\delta}_1|^2 dx dt
+\lambda \int_0^T \int_\Omega\int_Y {\mathcal C}_{22} dy |\hat{\delta}_2|^2 dx dt
+\int_0^T b((\hat{\delta}_1(t),\hat{\delta}_2(t)),(\hat{\delta}_1(t),\hat{\delta}_2(t))) dt=0.
\end{split}
\eeq 
Choosing $\lambda$ large enough, we have
\beq
\bsp
\frac{1}{2} \int_\Omega \int_Y {\mathcal C}_{11} dy|\hat{\delta}_1(T)|^2 dx
+\frac{1}{2} \int_\Omega \int_Y {\mathcal C}_{22} dy|\hat{\delta}_2(T)|^2 dx
+ \alpha \int_0^T( ||\nabla \hat{\delta}_1(t)||_H^2 + ||\nabla \hat{\delta}_2(t)||_H^2) dt =0,
\end{split}
\eeq 
by Lemma \ref{garding_hom}. 
We deduce $\hat{\delta}_1 = \hat{\delta}_2 = 0$ thus $\delta_1 = \delta_2 = 0$. 
Thus, we conclude $u_{10} = v_{10}$, $u_{20} = v_{20}$. 
\end{proof}
\textbf{Acknowledgment}
{A part of this work is conducted when Jun Sur Richard Park was a visiting PhD student at Nanyang Technological University (NTU) under East Asia and Pacific Summer Institutes (EAPSI) programme organized by the US National Science Foundation (NSF) and Singapore National Research Foundation (NRF) under Grant No. 1713805. 
Jun Sur Richard Park thanks US NSF and Singapore NRF for the financial support and NTU for hospitality.}  Viet Ha Hoang is supported by the MOE Tier 2 grant MOE2017-T2-2-144

\bibliographystyle{siam}
\bibliography{referencesHomMulti}

\end{document}